\def\eps{\varepsilon}
\def\e          {\ensuremath{\mathrm{e}}}
\def\C          {\ensuremath{\mathbb C}}
\def\R          {\ensuremath{\mathbb R}}
\newcommand{\M}{\mathcal{M}}
\newcommand{\cM}{\mathcal{M}}
\def\bigo{{\mathcal O}}
\def\iu{\mathrm{i}}
\newcommand{\cS}{\mathcal{S}}
\newcommand{\cP}{\mathcal{P}}
\renewcommand{\Re}{{\hbox{\rm Re}\,}}
\renewcommand{\Im}{{\hbox{\rm Im}\,}}
\renewcommand{\e}{{\rm e}}
\newcommand{\clambda}{\overline{\lambda}}
\def\conj{\overline}
\def\clambda{{\overline{\lambda}}}
\def\cz{{\overline{z}}}
\newcommand{\qed}{\hfill \ensuremath{\Box}}
\renewcommand{\Re}{\text{\rm Re}}
\newcommand{\du}{\dot{u}}
\newcommand{\dv}{\dot{v}}
\newcommand{\dx}{\dot{x}}
\newcommand{\dy}{\dot{y}}
\def\oeps{\eps_\star}
\def\ophi{\phi}
\newtheorem{example}[theorem]{Example}
\newtheorem{remark}[theorem]{Remark}
\newtheorem{assumption}[theorem]{Assumption}
\title{Rank-$1$ matrix differential equations for \\
structured eigenvalue optimization}
\author{Nicola Guglielmi\footnotemark[1] \and Christian Lubich\footnotemark[3] \and Stefano Sicilia\footnotemark[1]}
\begin{document}

\maketitle

\renewcommand{\thefootnote}{\fnsymbol{footnote}}
\footnotetext[1]{Division of Mathematics, 
Gran Sasso Science Institute,
Via Crispi 7,
I-67100   L' Aquila,  Italy. Email: {\tt nicola.guglielmi@gssi.it, stefano.sicilia@gssi.it}}
\footnotetext[3]{Mathematisches Institut,
       Universit\"at T\"ubingen,
       Auf der Morgenstelle 10,
       D--72076 T\"ubingen,
       Germany. Email: {\tt lubich@na.uni-tuebingen.de}}
\renewcommand{\thefootnote}{\arabic{footnote}}

\begin{abstract}
A new approach to solving eigenvalue optimization problems for large structured matrices is proposed and studied. The class of optimization problems considered is related to computing structured pseudospectra and their extremal points, and to structured matrix nearness problems such as computing the structured distance to instability or to singularity. The structure can be a general linear structure and includes, for example, large matrices with a given sparsity pattern, matrices with given range and co-range, and Hamiltonian matrices.
Remarkably, the eigenvalue optimization can be performed on the manifold of complex (or real) rank-1 matrices, which yields
a significant reduction of storage and in some cases of the computational cost.
The method relies on a constrained gradient system and the projection of the gradient
onto the tangent space of the manifold of complex rank-$1$ matrices. It is shown that near a local minimizer this projection is very close to the identity map, and so the computationally favorable rank-1 projected system behaves locally like the  
gradient system.

\end{abstract}

\begin{keywords}
Structured matrix nearness problems, structured pseudospectrum, pseudospectral abscissa, pseudospectral radius, 
rank-1 perturbations, low-rank dynamics, gradient system.
\end{keywords}

\begin{AMS}
15A18, 65F15
\end{AMS}

\pagestyle{myheadings} \thispagestyle{plain}
\markboth{N. GUGLIELMI,  CH. LUBICH,  S. SICILIA}{Rank-$1$ differential equations for structured eigenvalue optimization}

\section{Introduction}
\label{sec:intro}

We describe an approach to structured eigenvalue optimization problems that uses constrained gradient flows 
and the underlying rank-$1$ property of the optimizers. We illustrate basic techniques on a class 
of model problems, which arise in computing 
structured pseudospectra or their extremal points and appear as the essential algorithmic building block in structured matrix nearness problems.
For example, we determine the largest possible spectral abscissa or radius of a given matrix under perturbations of a prescribed norm that preserve its
structure, or - in other words - the structured pseudospectral abscissa or radius. This is an important subtask in the computation of structured stability radii (or structured distance to instability in another terminology).
In the literature these quantities are extensively studied with the purpose of analyzing stability
properties and robustness of linear dynamical systems (see, e.g., \cite{HinP05}).
Similarly, if one is interested in the distance of a matrix to singularity, the unstructured distance is the smallest singular
value. However, if the matrix is structured, having a small singular value does not imply the existence of a small structured
perturbation that makes it singular, and the structured distance to singularity is not readily obtained.

The structures considered here are general complex- or real-linear structures, including for example matrices with a given sparsity pattern, symmetric such matrices, matrices with prescribed range and co-range, Hamiltonian, Toeplitz and Hankel matrices, and block matrices whose blocks may have any of those properties.

The method we present relies first on a norm- and structure-constrained gradient system and then on its reduction to the manifold of rank-$1$ matrices. Instead of a direct discrete approach to solve the optimization problems, we present a continuous ODE-based optimization method which is crucial to reveal the underlying rank-$1$ property of optimizers, on which we build our method. This property is well-known for unstructured problems 
(see e.g. \cite{TrE05}) and has also been exploited for developing suitable algorithms (see e.g. \cite{GO11,KV14,GL11}).

There are several situations, addressed in the literature, where considering a time-continuous algorithm provides new insight. 
The reader is referred for example to \cite{Bro91,Chu08,HeM94,TrE05,AbMS09} and the references therein. This list is far from being exhaustive.

In previous works, structured eigenvalue optimization problems were addressed for some specific structures. For example when the matrices are required to be real (the unstructured problem would consider them as complex), it has been proved
that the optimizers have a rank-$2$ structure \cite{QiuBRDYD95} and indeed are obtained as real parts of an underlying rank-$1$ matrix 
\cite{GL13}. Similarly, Hamiltonian eigenvalue optimization has been studied in detail 
in \cite{MeX08} 
and \cite{AlaBKMM11}, in the ambit of robust passivity analysis of linear control systems, where eigenvalues of Hamiltonian matrices have to be bounded away from the imaginary axis. In that case it is possible to show that for a real Hamiltonian matrix, extremal perturbations have
rank $4$ \cite{GKL15}. However, when considering for example a sparse matrix, the low-rank property of optimizers seems to be irremediably lost.
It is a basic goal of this article to uncover the underlying rank-$1$ property and to explain how it can be  used in algorithms for structured eigenvalue optimization.

The paper is organized as follows. In Section \ref{sec:problem} we set up the framework and present
our approach, which is based on a structure- and norm-constrained gradient system.
We show that optimizers are orthogonal projections of rank-1 matrices onto the given structure. In Section~\ref{sec:rank1} we introduce a differential equation on the manifold of rank-1 matrices of unit Frobenius norm, for which the stationary points are in a bijective correspondence with the stationary points of the structure- and norm-constrained gradient system.  In Section \ref{sec:locconv} we prove local convergence to strong minima. 
In Section \ref{sec:proto-num} we discretize the rank-1 differential equation by a splitting method. This leads us to a fully discrete algorithm that updates rank-1 matrices in every step. 
Then, in Section \ref{sec:stabrad} we describe a two-level approach to compute structured stability radii (or structured distance to instability), used to characterize robustness of spectral stability properties. This is an important application of the considered
class of eigenvalue optimization problems to solving structured matrix nearness problems. The structured distance to singularity is computed in an analogous way.
Finally, in Section \ref{sec:ill} we present some illustrative examples showing that the rank-1 system
is well-suited for the efficient computation of optimizers.

\section{Problem description}
\label{sec:problem}


Let $A \in \C^{n,n}$ be a given matrix and let $\lambda(A)\in\C$ be a target
eigenvalue of~$A$, for example:
\begin{itemize}
\item[(i) ] \quad the eigenvalue of minimal or maximal real part;
\item[(ii) ] \quad the eigenvalue of minimal or maximal modulus;
\item[(iii) ] \quad the closest eigenvalue to a given set in the complex plane.
\end{itemize}


Let $\cS$ be a subspace of the vector space of complex or real $n\times n$ matrices, e.g. a space of matrices with a prescribed sparsity pattern,  or matrices with given range and co-range, or
 Toeplitz matrices, or Hankel matrices, or Hamiltonian matrices, etc.  

We let
\begin{equation}
\label{ass:f}
f: \C^2 \rightarrow \C \quad\text{with}\quad f\left( z, \cz \right) = f\left( \cz, z \right) \in \R \quad\text{for all }\,z\in\C
\end{equation}
be a given smooth function, 
e.g., $f$ or $-f$ evaluated at $\left( z, \cz \right)$ equals
\[
\Re\,z = \frac{z + \cz}{2}, \quad | z |^2 = z \cz.
\]  

\begin{remark} \rm
An extension to functions of several target eigenvalues is direct, but is not considered in this paper,
for sake of conciseness.
\end{remark}

We consider the following structured eigenvalue optimization problem: For a given perturbation size $\varepsilon>0$, find 
\begin{equation} \label{eq:optimiz0}
\arg\min\limits_{\Delta \in \cS,\, \| \Delta \|_F = \eps} f \left( \lambda\left( A + \Delta \right), \clambda \left( A + \Delta \right)  \right),
\end{equation}
where $\| \Delta \|_F$ is the Frobenius norm of the structured matrix $\Delta\in\cS$, i.e.~the Euclidean norm of the vector of its entries; where
$\lambda(A+\Delta)$ is the considered target eigenvalue of the perturbed matrix $A + \Delta$.
The $\arg\max$ case  is treated analogously, replacing $f$ by $-f$.
%
It is convenient to write
\[
\Delta = \eps E \quad \mbox{ with} \ \| E \|_F = 1
\]
and define
\begin{equation} \label{eq:optimiz0S}
F_\eps(E) = f \left( \lambda\left( A + \eps E \right), \clambda\left( A + \eps E \right)  \right)
\end{equation}
so that Problem~\eqref{eq:optimiz0} is equivalent to the  structured eigenvalue optimization problem 
\begin{equation} \label{eq:optimizS}
\arg\min\limits_{E \in \cS, \| E \|_F = 1} F_\eps(E).
\end{equation}
Problem \eqref{eq:optimiz0} or \eqref{eq:optimizS} is a nonconvex, nonsmooth optimization problem.

In a variant to the above problem, the inequality constraints $\| \Delta \|_F \le \eps$ and $\| E \|_F \le 1$ can also be considered in
\eqref{eq:optimiz0} and~\eqref{eq:optimizS}, respectively.


\subsection{Projection onto the structure}\label{subsec:proj-structure}
In order to treat the above  problem, we shall make use of a projection onto the structure space $\cS$.

Given two complex $n \times n$ matrices, we denote by (tr$(\cdot)$ denotes the trace) 
\begin{equation*}
\langle X,Y \rangle =\sum_{i,j} \conj{x}_{ij}y_{ij} = {\rm tr}(X^* Y)
\end{equation*} 
the inner product in $\C^{n,n}$ that induces the Frobenius norm $\| X \|_F = \langle X,X \rangle^{1/2}$.

Let $\Pi^\cS$ be the orthogonal projection (w.r.t. the Frobenius inner product)  onto~$\cS$\/: for every $Z\in \C^{n,n}$, 
\begin{equation}\label{Pi-S}
\Pi^\cS Z \in \cS \quad\text{ and } \quad \Re\langle \Pi^\cS Z, W \rangle = \Re\langle Z,W \rangle \quad \forall \, W\!\in\cS.
\end{equation}
For a complex-linear subspace $\cS$, taking the real part of the complex inner product can be omitted (because with $W\in\cS$, then also $\iu W\in\cS$), but taking the real part is needed for real-linear subspaces. Note that for $\cS=\R^{n,n}$,  we then have $\Pi^\cS Z=\Re\, Z$ for all $Z\in\C^{n,n}$.
In the following examples, the stated action of $\Pi^\cS$ is readily verified. 

\begin{example}[Sparse matrices] \rm If $\cS$ is the space of complex matrices with a prescribed sparsity pattern, then $\Pi^\cS Z$ leaves the entries of $Z$ on the sparsity pattern unchanged and annihilates those outside the sparsity pattern.  If $\cS$ is the space of real matrices with a prescribed sparsity pattern, then $\Pi^\cS Z$ takes instead the real part of the entries of $Z$ on the sparsity pattern.
\end{example}

\begin{example} [Matrices with prescribed range and co-range]
\rm An example of particular interest in control theory is the perturbation space
\[
\cS = \{ B \Delta C \,:\, \Delta \in \R^{k,l} \},
\]
where $B\in\R^{n,k}$ and $C\in\R^{l,n}$ with $k,l<n$  are given matrices of full rank. Here, $\Pi^\cS Z = B B^\dagger Z C^\dagger C$,
where $B^\dagger$ and $C^\dagger$ are the Moore--Penrose pseudo-inverses of $B$ and $C$, respectively. 
\end{example}

\begin{example}[Toeplitz matrices] \rm If $\cS$ is the space of complex $n\times n$ Toeplitz matrices, then $\Pi^\cS Z$ is obtained by replacing in  each diagonal all the entries of $Z$ by their arithmetic mean. For real Toeplitz matrices, the same action is done on $\Re\, Z$. 
\end{example}

\begin{example} [Hamiltonian matrices] \rm If $\cS$ is the space of $2d\times 2d$ real Hamiltonian matrices, then $\Pi^\cS Z = J^{-1}\mathrm{Sym}(\Re(JZ))$, where $\mathrm{Sym}(\cdot)$ takes the symmetric part of a matrix and (here $I_d$ denotes the identity matrix)
\[
J=\begin{pmatrix}
0 & I_d \\ -I_d & 0
\end{pmatrix},
\]
for which $J^{-1}=J^\top=-J$. We recall that a real matrix $A$ is Hamiltonian if $JA$ is symmetric.
\end{example}

\subsection{Free gradient of the functional $F_\eps$}
To derive the gradient of the functional $F_\eps$, we need the derivative of the target eigenvalue $\lambda(A+\eps E(t))$ along paths of matrices $E(t)$, for $t$ in some interval. In the case of a simple eigenvalue, which is the situation we will consider in the following,
this derivative is obtained from the following well-known result.

\begin{lemma}[Derivative of simple eigenvalues (e.g. \cite{Kato})]\label{lem:eigderiv} %
Consider a continuously differentiable path of square complex matrices $M(t)$ for $t$ in an open interval $I$. Let $\lambda(t)$, $t\in I$, be a continuous path of simple eigenvalues of $M(t)$. Let $x(t)$ and $y(t)$ be left and right eigenvectors, respectively, of $M(t)$ to the eigenvalue $\lambda(t)$. Then, $x(t)^*y(t) \neq 0$ for $t\in I$ and $\lambda$ is continuously differentiable on $I$ with 
\begin{equation}
\dot{\lambda} = \frac{x^* \dot{M} y}{x^* y},\,
\end{equation}
where we omit dependence on time and indicate by dot differentiation wrt time.

Moreover, ``continuously differentiable'' can be replaced with ``analytic'' in the assumption and the conclusion.
\end{lemma}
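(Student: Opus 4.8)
The plan is to split the statement into an algebraic part — that $x^*y\neq 0$ whenever $\lambda$ is simple — and an analytic part, in which I establish the regularity of $\lambda(t)$ together with the differentiation formula. For the analytic part I would work with the Riesz spectral projection, since this handles the $C^1$ and the analytic hypotheses in one stroke and simultaneously produces a differentiable eigenvector, which is exactly what the formula requires.

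First I would prove $x^*y\neq 0$. Since $\lambda$ is a simple eigenvalue of $M=M(t)$, it is in particular semisimple, so $\C^n=\ker(M-\lambda I)\oplus\mathrm{range}(M-\lambda I)$, with $\ker(M-\lambda I)=\mathrm{span}\{y\}$ one-dimensional. The left eigenvector satisfies $x^*(M-\lambda I)=0$, i.e.\ $x$ spans $\ker((M-\lambda I)^*)$, and hence $\mathrm{range}(M-\lambda I)=\{x\}^\perp$. If we had $x^*y=0$, then $y\in\{x\}^\perp=\mathrm{range}(M-\lambda I)$, contradicting $y\in\ker(M-\lambda I)$ together with the directness of the sum. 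Therefore $x^*y\neq 0$.

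Next I would fix $t_0\in I$ and enclose $\lambda(t_0)$ by a small positively oriented circle $\Gamma$ separating it from the rest of $\sigma(M(t_0))$. By continuity of the spectrum, for $t$ near $t_0$ the matrix $M(t)$ has exactly one eigenvalue inside $\Gamma$, namely the simple eigenvalue $\lambda(t)$, and I define the spectral projection
\[
P(t)=\frac{1}{2\pi\iu}\oint_\Gamma (zI-M(t))^{-1}\,dz .
\]
Because $\Gamma$ stays in the resolvent set, the integrand depends on $t$ as regularly as $M$ does — continuously differentiably, respectively analytically — so $P(t)$ is a rank-one projection of the same regularity. This gives $\lambda(t)=\mathrm{tr}\bigl(M(t)P(t)\bigr)$, proving $\lambda\in C^1$ (resp.\ analytic), and a differentiable right eigenvector $\widetilde y(t)=P(t)y_0$ for any $y_0$ with $P(t_0)y_0\neq 0$; applying the same construction to $M(t)^*$ yields a differentiable left eigenvector $\widetilde x(t)$.

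Finally I would differentiate $M\widetilde y=\lambda\widetilde y$ to get $\dot M\widetilde y+M\dot{\widetilde y}=\dot\lambda\,\widetilde y+\lambda\dot{\widetilde y}$ and left-multiply by $\widetilde x^*$; using $\widetilde x^* M=\lambda\widetilde x^*$ cancels the two $\dot{\widetilde y}$ terms, leaving $\widetilde x^*\dot M\widetilde y=\dot\lambda\,\widetilde x^*\widetilde y$, and dividing by $\widetilde x^*\widetilde y\neq 0$ proves the formula for $\widetilde x,\widetilde y$. Since both sides of $\dot\lambda=(x^*\dot M y)/(x^*y)$ are invariant under rescaling $x$ and $y$ by nonzero scalars, and the given $x(t),y(t)$ are at each $t$ scalar multiples of $\widetilde x(t),\widetilde y(t)$, the identity transfers to the eigenvectors in the statement with no regularity assumption on them. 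The main obstacle is precisely this analytic part: one cannot naively differentiate the eigenvalue equation without first knowing that a differentiable eigenvector exists, and individual eigenvectors need not even be choosable continuously without an argument. The spectral-projection device is what settles this cleanly and uniformly; an alternative would be the implicit function theorem applied to the bordered system $\bigl(M(t)y-\lambda y,\ c^*y-1\bigr)=0$, whose Jacobian in $(y,\lambda)$ is nonsingular exactly because $\lambda$ is simple.
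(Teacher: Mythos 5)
Your proof is correct, and it coincides with the classical argument in the reference the paper cites for this lemma: the paper itself gives no proof, deferring to Kato, whose treatment is exactly your Riesz spectral projection $P(t)=\frac{1}{2\pi\iu}\oint_\Gamma (zI-M(t))^{-1}\,dz$ yielding regularity of $\lambda(t)=\mathrm{tr}\bigl(M(t)P(t)\bigr)$ and a differentiable eigenvector, followed by differentiating the eigenvalue equation. Your handling of the two standard subtleties --- proving $x^*y\neq 0$ from the directness of $\ker(M-\lambda I)\oplus\mathrm{range}(M-\lambda I)$, and noting the scaling invariance that transfers the formula to arbitrary (possibly non-differentiable) eigenvector choices --- is sound, so nothing is missing.
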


Since we have  $x(t)^*y(t) \neq 0$, we can apply the  normalization
\begin{equation}\label{eq:scalxy}
\| x(t) \|=1, \ \ \|y(t) \| =1, \quad x(t)^* y(t) \text{ is real and positive.}
\end{equation}
The norm $\|\cdot\|$ is chosen as the Euclidean norm, and $x^*={\overline x}^\top$.
Clearly, a pair of left and right eigenvectors $x$ and
$y$ fulfilling \eqref{eq:scalxy} may be replaced by $\mu x$ and $\mu y$ for 
any complex $\mu$ of modulus $1$ without changing the property~\eqref{eq:scalxy}.

The following lemma will allow us to compute the steepest descent direction of the functional $F_\eps$
in $\C^{n,n}$, which means neglecting any structural constraint. For this reason we refer to it as the 
{\em free gradient} of the functional. 

\begin{lemma}[Free gradient] \label{lem:gradient}
Let $E(t)\in \C^{n,n}$, for $t$ near $t_0$, be a continuously differentiable path of matrices, with the derivative denoted by $\dot E(t)$.
Assume that $\lambda(t)$ is a simple eigenvalue of  $A+\eps E(t)$ depending continuously on $t$, with associated eigenvectors
$x(t)$ and $y(t)$ satisfying \eqref{eq:scalxy}, and let the eigenvalue condition number be
\begin{equation*}
\kappa(t) = \frac1{x(t)^* y(t)} > 0.
\end{equation*}
Then, $F_\eps(E(t))=f \bigl( \lambda(t) , \overline{\lambda(t)}  \bigr)$ 
is continuously differentiable w.r.t. $t$ and we have
\begin{equation} \label{eq:deriv}
\frac1{ \eps \kappa(t) } \,\frac{d}{dt} F_\eps(E(t)) = \Re \,\Bigl\langle  G_\eps(E(t)),  \dot E(t) \Bigr\rangle,
\end{equation}
where the (rescaled) gradient of $F_\eps$ is the rank-1 matrix
\begin{equation} \label{eq:freegrad}
G_\eps(E) = 2 f_{\clambda} \, x y^* \in \C^{n,n} \qquad \mbox{\rm with} \ f_{\clambda} = \frac{\partial f}{\partial \clambda}(\lambda, \clambda)
\end{equation}
for the target eigenvalue $\lambda=\lambda(A+\eps E)$ and the corresponding left and right eigenvectors $x$ and $y$ normalized 
according to \eqref{eq:scalxy}.
\end{lemma}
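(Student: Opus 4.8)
The plan is to differentiate $F_\eps(E(t)) = f(\lambda(t), \overline{\lambda(t)})$ via the chain rule, using Lemma~\ref{lem:eigderiv} to handle the inner derivative of the eigenvalue, and then to package the result as a real inner product with a rank-$1$ matrix. The symmetry assumption~\eqref{ass:f} and the reality of $F_\eps$ will be the tools that collapse the two chain-rule terms into a single real part.

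First I would apply the chain rule to $f(\lambda, \overline\lambda)$, treating $\lambda$ and $\overline\lambda$ as the two independent slots (the Wirtinger viewpoint). This yields
\begin{equation*}
\frac{d}{dt} F_\eps(E(t)) = f_\lambda \,\dot\lambda + f_{\clambda}\,\overline{\dot\lambda},
\end{equation*}
where $f_\lambda = \partial f/\partial\lambda$ and $f_{\clambda}=\partial f/\partial\clambda$. The key structural input is that, by~\eqref{ass:f}, $f(z,\cz)=f(\cz,z)$, which upon differentiating gives $f_\lambda(\lambda,\clambda) = \overline{f_{\clambda}(\lambda,\clambda)}$ (since $f$ is real-valued, its two Wirtinger derivatives are complex conjugates). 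Hence the two terms above are complex conjugates of each other, and
\begin{equation*}
\frac{d}{dt} F_\eps(E(t)) = 2\,\Re\!\left( f_{\clambda}\,\overline{\dot\lambda}\right).
\end{equation*}

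Next I would substitute the eigenvalue derivative from Lemma~\ref{lem:eigderiv}. With $M(t)=A+\eps E(t)$, so that $\dot M = \eps\dot E$, and with the normalization~\eqref{eq:scalxy} giving $x^*y = 1/\kappa(t)>0$ real, Lemma~\ref{lem:eigderiv} yields
\begin{equation*}
\dot\lambda = \frac{x^*(\eps \dot E)\,y}{x^* y} = \eps\,\kappa(t)\, x^* \dot E\, y.
\end{equation*}
Since $\kappa(t)$ is real, $\overline{\dot\lambda} = \eps\,\kappa(t)\,\overline{x^*\dot E\,y}$. Inserting this gives
\begin{equation*}
\frac{1}{\eps\kappa(t)}\,\frac{d}{dt}F_\eps(E(t)) = \Re\!\left( 2 f_{\clambda}\,\overline{x^*\dot E\,y}\right).
\end{equation*}
It then remains to recognize the scalar $x^*\dot E\,y$ as a Frobenius inner product: one has $x^*\dot E\,y = \langle x y^*, \dot E\rangle = \operatorname{tr}((xy^*)^*\dot E)$, so that $\overline{x^*\dot E\,y} = \overline{\langle xy^*,\dot E\rangle}$, and absorbing the scalar factor $2f_{\clambda}$ into the first argument (using conjugate-linearity in the first slot, so that $\Re\langle 2 f_{\clambda}\,x y^*,\dot E\rangle = \Re(2\overline{f_{\clambda}}\,\langle xy^*,\dot E\rangle)$ — and checking the conjugation bookkeeping carefully here) produces exactly $\Re\langle G_\eps(E),\dot E\rangle$ with $G_\eps(E)=2f_{\clambda}\,xy^*$.

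The main obstacle I anticipate is purely bookkeeping: keeping the Wirtinger conjugations consistent across three places at once — the identity $f_\lambda=\overline{f_{\clambda}}$ from the symmetry of $f$, the conjugate in $\overline{\dot\lambda}$, and the conjugate-linearity of the inner product in its first argument. A sign or conjugate error in any one of these would misplace the factor $f_{\clambda}$ versus $\overline{f_{\clambda}}$ in $G_\eps$. The cleanest safeguard is to verify the final formula on the two worked examples $f=\Re z$ (where $f_{\clambda}=\tfrac12$, real) and $f=|z|^2=z\cz$ (where $f_{\clambda}=\lambda$), confirming in each case that $G_\eps(E)$ is the expected gradient direction. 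Continuous differentiability of $F_\eps$ in $t$ follows immediately from that of $\lambda(t)$ (Lemma~\ref{lem:eigderiv}) composed with the smooth $f$.
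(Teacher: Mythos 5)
Your proposal is correct and follows essentially the same route as the paper's proof: chain rule in the Wirtinger variables, the identity $f_\lambda = \overline{f_{\clambda}}$ from assumption \eqref{ass:f}, substitution of the eigenvalue derivative from Lemma~\ref{lem:eigderiv}, and the rewriting of $x^*\dot E\,y$ as the Frobenius inner product $\langle xy^*,\dot E\rangle$ with the scalar absorbed conjugate-linearly into the first slot. The only (immaterial) difference is that you collapse the two chain-rule terms into $2\,\Re\bigl(f_{\clambda}\,\overline{\dot\lambda}\bigr)$ while the paper writes $2\,\Re\bigl(f_\lambda\, x^*\dot E\, y\bigr)$; your conjugation bookkeeping checks out and yields the same $G_\eps(E)=2f_{\clambda}\,xy^*$.
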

\begin{proof}
We first observe that \eqref{ass:f} implies 
\[
f_{\clambda} = \conj{f_{\lambda}} =\displaystyle\conj{\frac{\partial f}{\partial \lambda}(\lambda, \clambda)}.
\]
Using Lemma \ref{lem:eigderiv}, we obtain that $F_\eps(E(t))$ is differentiable with
\begin{eqnarray} \nonumber
\frac{ d }{dt} F_\eps \left( E(t) \right) & = & f_\lambda \,\dot \lambda + f_{\clambda} \,\overline{\dot \lambda} \\
& = & \frac{\eps}{x^*y}  \left( f_\lambda\, x^* \dot{E} y + f_{\clambda} \;\conj{x^* \dot{E} y} \right) = 
\frac{ \eps}{x^*y} \, 2\,\Re \left( f_\lambda\, x^* \dot{E} y \right),
\end{eqnarray}
where  we omit the omnipresent dependence on $t$ on the right-hand side.
Noting that
\[
\Re \bigl( f_\lambda \,x^*\dot{E} y  \bigr) = \Re\, \bigl\langle \conj{f_{\lambda}}\, x y^* , \dot{E}  \bigr\rangle,
\]
we obtain \eqref{eq:deriv}--\eqref{eq:freegrad}.
\end{proof}

\begin{example} \label{ex:G} \rm 
For 
\[
f(\lambda,\clambda) =-\tfrac12(\lambda+\clambda)=-\Re\,\lambda
\] 
we have $2 f_{\clambda}=-1$ and hence $G_\eps(E)=-xy^*$,
which is nonzero for all $\lambda$. For 
\[
f(\lambda,\clambda) = -\tfrac12 |\lambda|^2 = -\tfrac12 \lambda \clambda
\] 
we have 
$2 f_{\clambda} =  -\lambda$.
In this case 
$
G_\eps(E) =  - \lambda \,x y^*,
$
which is nonzero whenever $\lambda\ne 0$. 
\end{example}

\subsection{Projected gradient}

The optimization problem \eqref{eq:optimizS} is set on the manifold $\cS_1 = \{ E \in \cS, \ \| E \|_F = 1 \}$.

\subsubsection*{Preserving the structure}

Consider a smooth path of {\it structured} matrices $E(t)\in\cS$. Since then also $\dot E(t)\in\cS$, we have by Lemma~\ref{lem:gradient}
\begin{equation} \label{eq:deriv-S}
\frac1{ \eps \kappa(t) } \,\frac{d}{dt} F_\eps(E(t)) = \Re\bigl\langle  G_\eps^\cS(E(t)),  \dot E(t) \bigr\rangle
\end{equation}
with the rescaled structured gradient
\begin{equation}\label{gradient-S}
G_\eps^\cS(E) := \Pi^\cS \, G_\eps(E) = \Pi^\cS(2 f_{\clambda} \,xy^*) \in \cS,
\end{equation}
which is the projection onto $\cS$ of a rank-1 matrix.

\subsection*{Preserving the unit norm}

To comply with the constraint ${\| E(t) \|_F^2 =1}$, we must have 
\begin{equation} \label{eq:normconstr}
 0 = \frac12\,\frac d{dt}\| E(t) \|_F^2= \Re\, \Big\langle E(t), \dot E(t) \Big\rangle.
\end{equation}
In view of Lemma~\ref{lem:gradient} we are thus led to the following constrained optimization problem for the admissible direction of steepest descent.

\begin{lemma}[Direction of steepest admissible descent]
\label{lem:opt} 
Let $E \in \cS,\  G\in\C^{n,n}$ with ${\|E\|_F=1}$ and the orthogonal projection of $G$ onto $\cS$, $G^\cS = \Pi^\cS G \neq 0$. 
A solution of the optimization problem
\begin{eqnarray}
\nonumber
Z_\star  & = & \arg\min_{Z \in \C^{n,n}} \ \Re\,\langle  G,  Z \rangle 
\\
\nonumber
{\rm subj. to} &  & Z \in \cS 
\\
{\rm and}      &  & \Re\,\langle E, Z \rangle=0
\label{eq:opt}
\\
\nonumber
{\rm and}      &  & \|Z\|_F=1 \qquad \mbox{\rm (for uniqueness)}
\end{eqnarray}
is given by 
\begin{align}
\mu Z_\star & =  - G^\cS + \Re\hspace{1pt}\langle G^\cS, E \rangle\, E,
\label{eq:Eopt}
\end{align}
where  $\mu$ is 
the Frobenius norm of the matrix on the right-hand side. 
The solution is unique if $G^\cS$ is not a multiple of $E$.
\end{lemma}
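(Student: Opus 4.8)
The plan is to exploit that $(\C^{n,n},\,\Re\langle\cdot,\cdot\rangle)$ is a real inner product space and that both of the constraints $Z\in\cS$ and $\Re\langle E,Z\rangle=0$ are linear, so that the feasible set before imposing the norm is a linear subspace; minimizing a linear functional over the unit sphere of that subspace is then a Cauchy--Schwarz exercise.

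First I would reduce the objective. Since every feasible $Z$ lies in $\cS$, the defining property \eqref{Pi-S} of the orthogonal projection gives $\Re\langle G,Z\rangle=\Re\langle\Pi^\cS G,Z\rangle=\Re\langle G^\cS,Z\rangle$. Thus the objective depends on $G$ only through $G^\cS$, and I may work with $G^\cS\in\cS$ throughout.

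Next I would identify the feasible subspace. The two linear constraints cut out $\mathcal{V}:=\{Z\in\cS:\Re\langle E,Z\rangle=0\}$, which, because $E\in\cS$ with $\|E\|_F=1$, is exactly the orthogonal complement of the real line $\R E$ inside $\cS$. The orthogonal projection onto $\mathcal{V}$ of any $W\in\cS$ is $P_{\mathcal{V}}W=W-\Re\langle E,W\rangle\,E$; applied to $W=G^\cS$, and using $\Re\langle E,G^\cS\rangle=\Re\langle G^\cS,E\rangle$, this yields $P_{\mathcal{V}}G^\cS=G^\cS-\Re\langle G^\cS,E\rangle\,E$, which is precisely the negative of the right-hand side of \eqref{eq:Eopt}.

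Finally I would solve the sphere-constrained problem. For $Z\in\mathcal{V}$ one has $\Re\langle G^\cS,Z\rangle=\Re\langle P_{\mathcal{V}}G^\cS,Z\rangle$, and Cauchy--Schwarz in the real inner product gives $\Re\langle P_{\mathcal{V}}G^\cS,Z\rangle\ge-\|P_{\mathcal{V}}G^\cS\|_F$ whenever $\|Z\|_F=1$, with equality if and only if $Z=-P_{\mathcal{V}}G^\cS/\|P_{\mathcal{V}}G^\cS\|_F$. Setting $\mu=\|P_{\mathcal{V}}G^\cS\|_F$ then reproduces \eqref{eq:Eopt}. The one point requiring care is the degenerate case $P_{\mathcal{V}}G^\cS=0$, which occurs exactly when $G^\cS=\Re\langle G^\cS,E\rangle\,E$, i.e.\ when $G^\cS$ is a real multiple of $E$; there the functional is constant on the feasible sphere and the minimizer fails to be unique. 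Otherwise $P_{\mathcal{V}}G^\cS\neq0$ and the minimizer is unique, which is the stated uniqueness condition.
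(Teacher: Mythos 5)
Your proof is correct and follows essentially the same route as the paper's: the paper likewise views $\Re\langle\cdot,\cdot\rangle$ as a real inner product and identifies the right-hand side of \eqref{eq:Eopt} as the orthogonal projection of $-G^\cS$ onto the orthogonal complement of the span of $E$, with linearity of the constraints doing the rest. You merely spell out what the paper leaves implicit (the reduction $\Re\langle G,Z\rangle=\Re\langle G^\cS,Z\rangle$, the Cauchy--Schwarz equality case, and the degenerate case where $G^\cS$ is a real multiple of $E$), which is a welcome sharpening of the uniqueness discussion but not a different argument.
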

\medskip 

\begin{proof}
The result follows on noting that the real part of the complex inner product on $\C^{n,n}$ is a real inner product on $\R^{2n,2n}$, and the real inner product with a given vector (which here is a matrix) is maximized over a subspace by orthogonally projecting the vector onto that subspace. The expression in (\ref{eq:Eopt}) is the orthogonal projection of $-G^\cS$ to the orthogonal complement of the span of $E$, which is the tangent space at $E$ of the manifold of matrices of unit Frobenius norm.
Since $E,G^\cS\in\cS$ in \eqref{eq:Eopt}, also $Z_\star$ is in $\cS$.
\end{proof}
\subsection{Constrained gradient flow}
Lemmas~\ref{lem:gradient} and~\ref{lem:opt} show that the admissible direction of steepest descent of the functional $F_\eps$ at a matrix 
$E \in \cS$ of unit Frobenius norm is given by the positive multiples of the matrix $-G_\eps^\cS(E) + \Re\,\langle G_\eps^\cS(E), E \rangle E$. 

This leads us to consider the (rescaled) {\it gradient flow on the manifold of $n\times n$ structured matrices in  $\cS$ of unit Frobenius norm}:
\begin{equation}\label{ode-E-S}
\dot E = - G_\eps^\cS(E) + \Re\,\langle G_\eps^\cS(E), E \rangle E.
\end{equation}

By construction of this ordinary differential equation, we have that $\dot E \in \cS$ and $\Re \langle E, \dot E \rangle =0$ along its solutions, and so both the structure $\cS$ and the Frobenius norm $1$ are conserved.
As we follow the admissible direction of steepest descent of the functional $F_\eps$ along solutions $E(t)$ of this differential equation, we obtain the monotonicity property stated in the next section.

\subsubsection*{Monotonicity} 
Assuming simple eigenvalues almost everywhere along the trajectory,
we have the following monotonicity property.
\begin{theorem}[Monotonicity] \label{thm:monotone}
Let $E(t)$ of unit Frobenius norm satisfy the differential equation {\rm (\ref{ode-E-S})}.
Then,
\begin{equation}
\frac{d}{dt} F_\eps (E(t))  \le  0.
\label{eq:pos-S}
\end{equation}
\end{theorem}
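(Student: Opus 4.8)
The plan is to compute the time derivative of $F_\eps(E(t))$ directly along solutions of the gradient flow~\eqref{ode-E-S} and to recognize the resulting expression as a nonpositive quantity. The key observation is that everything is already assembled in the two preceding lemmas: Lemma~\ref{lem:gradient} (in its structured form~\eqref{eq:deriv-S}) gives the derivative of the functional in terms of the structured gradient $G_\eps^\cS(E)$, and the right-hand side of~\eqref{ode-E-S} is exactly the admissible descent direction produced by Lemma~\ref{lem:opt}. So the proof is essentially a substitution followed by an algebraic simplification.

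First I would invoke~\eqref{eq:deriv-S}, which is valid because $E(t)$ solves~\eqref{ode-E-S} and hence stays in $\cS$ with $\dot E(t)\in\cS$; this gives
\[
\frac{1}{\eps\,\kappa(t)}\,\frac{d}{dt}F_\eps(E(t)) = \Re\,\bigl\langle G_\eps^\cS(E),\,\dot E\bigr\rangle .
\]
Next I would substitute $\dot E = - G_\eps^\cS(E) + \Re\langle G_\eps^\cS(E), E\rangle\,E$ from~\eqref{ode-E-S} into this inner product and expand, using the bilinearity of $\Re\langle\cdot,\cdot\rangle$ as a real inner product. Writing $G:=G_\eps^\cS(E)$ and $r:=\Re\langle G,E\rangle$ for brevity, this yields
\[
\Re\,\bigl\langle G,\,\dot E\bigr\rangle = -\,\Re\langle G,G\rangle + r\,\Re\langle G,E\rangle = -\,\|G\|_F^2 + r^2 .
\]

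The final step is to show $-\|G\|_F^2 + r^2 \le 0$, i.e.~$\Re\langle G,E\rangle^2 \le \|G\|_F^2$. Since $\|E\|_F=1$, this is precisely the Cauchy--Schwarz inequality for the real inner product $\Re\langle\cdot,\cdot\rangle$: $|\Re\langle G,E\rangle|\le \|G\|_F\,\|E\|_F = \|G\|_F$. Because $\eps>0$ and $\kappa(t)>0$, the prefactor $\eps\,\kappa(t)$ is positive, so the sign of $\frac{d}{dt}F_\eps(E(t))$ agrees with that of $-\|G\|_F^2+r^2$, establishing~\eqref{eq:pos-S}. I do not anticipate a genuine obstacle here; the only point needing mild care is bookkeeping the \emph{real} inner product structure consistently (so that Cauchy--Schwarz applies on $\R^{2n^2}$ rather than over $\C$), exactly as was already used in the proof of Lemma~\ref{lem:opt}.
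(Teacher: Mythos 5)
Your proposal is correct and matches the paper's own proof essentially verbatim: the paper likewise substitutes the right-hand side of \eqref{ode-E-S} into the derivative formula from Lemma~\ref{lem:gradient}, obtains $-\|G_\eps^\cS(E)\|_F^2 + \bigl(\Re\langle G_\eps^\cS(E), E\rangle\bigr)^2$, and concludes by the Cauchy--Schwarz inequality together with $\|E\|_F=1$. Your added care about the real inner product structure and the positive prefactor $\eps\,\kappa(t)$ is exactly the right bookkeeping and is implicit in the paper's argument.
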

\begin{proof}
Although the result follows directly from Lemmas~\ref{lem:gradient} and \ref{lem:opt}, we compute the derivative directly. From Lemma~\ref{lem:gradient} and \eqref{ode-E-S} we have
\begin{align}
\nonumber
\frac1{\eps\kappa} \,\frac{d}{dt} F_\eps(E(t)) &= \Re \, \langle G_\eps^\cS(E), \dot E \rangle 
= \Re \, \langle G_\eps^\cS(E), -G_\eps^\cS(E) + \Re\,\langle G_\eps^\cS(E), E \rangle E \rangle
\\
\label{c-s}
&=  -\| G_\eps^\cS(E) \|_F^2 + \bigl( \Re\,\langle G_\eps^\cS(E), E \rangle \bigr)^2 \le 0.
\end{align}
The final inequality holds true by the Cauchy--Schwarz inequality and because $E$ has Frobenius norm $1$. 
\end{proof}

\subsubsection*{Stationary points}

A remarkable property of stationary points of \eqref{ode-E-S} is that they are projections onto $\cS$ 
of rank-$1$ matrices.

\begin{theorem} \label{thm:stat}
Let $E_\star\in\cS$ with $\| E_\star\|_F=1$ be such that the target eigenvalue $\lambda(A+\eps E_\star)$ is simple
and $G_\eps^\cS(E_\star)\ne 0$. Let $E(t)\in \cS$ be the solution of \eqref{ode-E-S} passing through $E_\star$. 
Then the following are equivalent: 
\begin{equation}\label{stat-S}
\begin{aligned}
1. \quad & 
\displaystyle\frac{ d }{dt} F_\eps \left( E(t) \right)  = 0
\\
2. \quad & 
\text{$E_\star$ \rm is a stationary point of the differential equation \eqref{ode-E-S}}.
\\
3. \quad & 
\text{$E_\star$ \rm is a real multiple of $G_\eps^\cS(E_\star)$.}
\end{aligned}
\end{equation}
\end{theorem}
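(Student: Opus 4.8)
The plan is to reduce all three statements to a single scalar identity that is already available from the proof of Theorem~\ref{thm:monotone}. Writing $G=G_\eps^\cS(E_\star)$ for brevity, equation~\eqref{c-s}, evaluated at the instant the trajectory $E(t)$ passes through $E_\star$, reads
\[
\frac{1}{\eps\kappa}\,\frac{d}{dt}F_\eps(E(t)) = -\|G\|_F^2 + \bigl(\Re\,\langle G, E_\star\rangle\bigr)^2 .
\]
Since $\eps\kappa>0$, statement~1 holds exactly when the right-hand side vanishes, so the whole argument hinges on analyzing when this expression is zero.

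First I would establish $1 \Leftrightarrow 3$. The map $(X,Y)\mapsto \Re\,\langle X,Y\rangle$ is a genuine real inner product on $\C^{n,n}$ regarded as a real vector space, and $\|E_\star\|_F=1$. The Cauchy--Schwarz inequality therefore gives $\bigl(\Re\,\langle G,E_\star\rangle\bigr)^2 \le \|G\|_F^2\,\|E_\star\|_F^2 = \|G\|_F^2$, with equality precisely when $G$ and $E_\star$ are linearly dependent over $\R$. Because $G\neq 0$ by hypothesis, linear dependence means exactly that $E_\star$ is a (nonzero) real multiple of $G$. Hence the right-hand side above vanishes if and only if $E_\star$ is a real multiple of $G=G_\eps^\cS(E_\star)$, which is statement~3.

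Next I would verify $2 \Leftrightarrow 3$ by inspecting the vector field directly. Stationarity of~\eqref{ode-E-S} at $E_\star$ means $0 = -G + \Re\,\langle G, E_\star\rangle E_\star$, i.e.\ $G = \Re\,\langle G, E_\star\rangle\,E_\star$, which already exhibits $G$ as a real multiple of $E_\star$. Conversely, if $G = c\,E_\star$ with $c\in\R$, then $\Re\,\langle G,E_\star\rangle = c\,\|E_\star\|_F^2 = c$, so the right-hand side of~\eqref{ode-E-S} collapses to $-c\,E_\star + c\,E_\star = 0$ and $E_\star$ is stationary. Chaining the two equivalences then yields the asserted triple equivalence.

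The only point requiring care is the equality case of Cauchy--Schwarz: one must invoke it for the real inner product $\Re\,\langle\cdot,\cdot\rangle$ rather than the complex one (for a complex-linear $\cS$ the two coincide, but for a real-linear structure only the real version is legitimate), and then reconcile the statement ``$G$ is a real multiple of $E_\star$'' with the formulation in item~3, ``$E_\star$ is a real multiple of $G$,'' using $G\neq 0$ together with $\|E_\star\|_F=1$ to pass freely between the two. Everything else is a direct substitution into the gradient flow.
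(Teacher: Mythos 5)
Your proposal is correct and follows essentially the same route as the paper: the paper proves the cycle 3.\ $\Rightarrow$ 2.\ $\Rightarrow$ 1.\ (by direct substitution into \eqref{ode-E-S}) and closes it with 1.\ $\Rightarrow$ 3.\ via the equality case of the Cauchy--Schwarz inequality in \eqref{c-s}, which is exactly the identity your argument hinges on. Your reorganization into the two biconditionals $1\Leftrightarrow 3$ and $2\Leftrightarrow 3$, and your explicit care with the real inner product $\Re\,\langle\cdot,\cdot\rangle$ and with passing between ``$G$ is a real multiple of $E_\star$'' and ``$E_\star$ is a real multiple of $G$'' using $G\neq 0$, only make the paper's terser argument explicit.
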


\begin{proof} Clearly, 3.~implies 2., which implies 1. Since the Cauchy--Schwarz inequality in \eqref{c-s} is strict unless
$E$ is a real multiple of $G_\eps^\cS(E)$, we obtain that 1. implies 3. 
\end{proof}
\smallskip

As a consequence, optimizers of \eqref{eq:optimizS} are projections onto $\cS$ of rank-1 matrices. This motivates us to search for a differential equation on the manifold of rank-$1$ matrices that leads to the same stationary points.

\section{Rank-1 differential equation} \label{sec:rank1}
In this section we consider a differential equation on the manifold $\M_1$ of rank-1 matrices, which is shown to lead to the same stationary points as the structure- and norm-constrained gradient flow \eqref{ode-E-S}. This differential equation, reformulated for the factors of the rank-1 matrices, is to be numerically solved into a stationary point.

\subsection{A rank-1 projected differential equation}
Solutions of \eqref{ode-E-S} can be written as 
$
E(t)=\Pi^\cS Z(t)
$, 
where $Z(t)$ solves
\begin{equation}\label{ode-E-S-Z}
\dot Z = -G_\eps(E) + \Re \langle G_\eps(E), E \rangle Z.
\end{equation}
We note that $\Re\langle \Pi^\cS Z, \Pi^\cS \dot Z \rangle=0$ if $\|\Pi^\cS Z\|_F=1$, so that the unit Frobenius norm of $E(t)=\Pi^\cS Z(t)$ is conserved. 

As every stationary point of this differential equation is of rank 1, we project the right-hand side onto the tangent space $T_Y\cM_1$ 
at $Y$ belonging to the manifold of complex rank-1 matrices $\cM_1=\cM_1(\C^{n,n})$ and consider instead the projected differential equation with solutions of rank 1:
 \begin{equation}\label{ode-E-S-1}
\dot Y = -P_Y G_\eps(E) + \Re \langle P_Y G_\eps(E), E \rangle Y \quad\text{ with }\ E=\Pi^\cS Y \quad\text{ of unit norm}.
\end{equation}
Here, $P_Y:\C^{n,n}\to T_Y\M_1$ is the orthogonal projection onto the tangent space $T_Y\M_1$, which for a rank-1 matrix $Y=\sigma uv^*$ with $\|u\|=\|v\|=1$ is given as (see \cite{KL07})
\begin{equation} \label{PY}
P_Y(Z) = Z- (I-uu^*)Z(I-vv^*).
\end{equation}
It is useful to note that $P_Y(Y)=Y$.
For $E=\Pi^\cS Y$ of unit Frobenius norm in \eqref{ode-E-S-1}, we find
\[
\Re\langle E, \dot E \rangle =  \Re\langle  E, \dot Y \rangle 
=-\Re\langle E, P_Y G_\eps(E)\rangle + \Re \langle P_Y G_\eps(E), E \rangle \,\Re\langle E,Y\rangle =0,
\]
where we used that $\Re\langle E,Y\rangle= \Re\langle \Pi^\cS E,Y\rangle= \Re\langle E,\Pi^\cS Y\rangle=\Re\langle E,E\rangle =\| E \|_F^2 =1$. So we have
\[
\| E(t) \|_F =1 \qquad\text{for all }t.
\]

\subsection*{Stationary points}
The following theorem states that under a non-degeneracy condition, the differential equations  \eqref{ode-E-S} and~\eqref{ode-E-S-1} yield the same stationary points.
\begin{theorem}[Relating stationary points]
\label{thm:stat-S}  
\begin{itemize}
\item[\rm (a) ] Let $E\in\cS$ of unit Frobenius norm be a stationary point of the gradient system \eqref{ode-E-S} 
that satisfies $\Pi^\cS G_\eps(E)\ne 0$. Then, $E=\Pi^\cS Y$
for a certain matrix $Y\in\cM_1$ that is a stationary point of the differential equation \eqref{ode-E-S-1}.

\item[\rm (b) ] Conversely, let $Y\in\cM_1$  be a stationary point of the differential equation \eqref{ode-E-S-1} such that $E=\Pi^\cS Y$
has unit Frobenius norm and $P_Y G_\eps(E)\ne 0$. Then, $P_Y G_\eps(E)=G_\eps(E)$, $\,Y$ is a nonzero real multiple of $G_\eps(E)$, and $E$ is a stationary point of the gradient system \eqref{ode-E-S}.
\end{itemize}
\end{theorem}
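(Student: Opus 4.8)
The plan is to exploit two structural facts throughout: that the free gradient $G_\eps(E)=2f_\clambda\,xy^*$ of \eqref{eq:freegrad} is a rank-1 matrix, and that by Theorem~\ref{thm:stat} a unit-norm $E\in\cS$ is stationary for \eqref{ode-E-S} exactly when it is a real multiple of $G_\eps^\cS(E)=\Pi^\cS G_\eps(E)$. The bridge between the two differential equations is the elementary observation, immediate from \eqref{PY}, that $P_Y$ fixes every rank-1 matrix $\beta xy^*$ for which $x\parallel u$ or $y\parallel v$ (in particular every scalar multiple of $Y=\sigma uv^*$), so that the projection $P_Y$ quietly disappears as soon as $G_\eps(E)$ is parallel to $Y$.

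For part (a), I would start from a stationary $E$ and invoke Theorem~\ref{thm:stat} to write $E=\gamma\,G_\eps^\cS(E)=\Pi^\cS(\gamma\,G_\eps(E))$ for a real $\gamma\ne0$ (nonzero since $\|E\|_F=1$ and $G_\eps^\cS(E)\ne0$). The natural candidate is then $Y:=\gamma\,G_\eps(E)$, which lies in $\M_1$ because $G_\eps(E)$ is a nonzero rank-1 matrix, and which satisfies $\Pi^\cS Y=E$ by construction. Since $Y$ is a scalar multiple of the rank-1 matrix $G_\eps(E)$, the factorization identity gives $P_Y G_\eps(E)=G_\eps(E)$, so the right-hand side of \eqref{ode-E-S-1} at $Y$ collapses to $\bigl(-1+\gamma\,\Re\langle G_\eps(E),E\rangle\bigr)G_\eps(E)$. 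It then remains only to check that the scalar factor vanishes; this is where the projection identity \eqref{Pi-S} enters, yielding $\Re\langle G_\eps(E),E\rangle=\gamma\,\|G_\eps^\cS(E)\|_F^2=1/\gamma$ after using $\|E\|_F=1$, so the bracket is zero and $Y$ is stationary.

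For the converse (b), stationarity of $Y$ in \eqref{ode-E-S-1} reads $P_Y G_\eps(E)=c\,Y$ with the real scalar $c=\Re\langle P_Y G_\eps(E),E\rangle$, where $c\ne0$ and $Y\ne0$ because $P_Y G_\eps(E)\ne0$. The crux, and the step I expect to be the \emph{main obstacle}, is to upgrade this to $P_Y G_\eps(E)=G_\eps(E)$, i.e.\ to show the tangential projection loses nothing. I would write $G_\eps(E)=cY+R$ with residual $R:=(I-uu^*)G_\eps(E)(I-vv^*)$ and test on the right against $v$: since $Rv=0$ and $Yv=\sigma u$, one gets $G_\eps(E)v=c\sigma u$, while the rank-1 form $G_\eps(E)=\beta xy^*$ gives $G_\eps(E)v=\beta(y^*v)x$; as $c\sigma\ne0$ this forces $x\parallel u$, whence $R=\beta\,[(I-uu^*)x]\,[(I-vv^*)y]^*=0$. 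Thus $G_\eps(E)=cY$ with $c$ real, which simultaneously yields $P_Y G_\eps(E)=G_\eps(E)$ and shows that $Y=c^{-1}G_\eps(E)$ is a nonzero real multiple of $G_\eps(E)$. Applying $\Pi^\cS$ gives $E=c^{-1}G_\eps^\cS(E)$, a real multiple of $G_\eps^\cS(E)\,(=cE\ne0)$, so Theorem~\ref{thm:stat} identifies $E$ as a stationary point of \eqref{ode-E-S}; alternatively one substitutes $G_\eps^\cS(E)=cE$ directly into the right-hand side of \eqref{ode-E-S} and checks that it vanishes.
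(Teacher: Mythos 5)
Your proposal is correct and takes essentially the same route as the paper: in (a) the same candidate $Y=\gamma\,G_\eps(E)$ with the same scalar cancellation $\Re\langle G_\eps(E),E\rangle=1/\gamma$, and in (b) the same decomposition $G_\eps(E)=cY+R$ with $R=(I-uu^*)G_\eps(E)(I-vv^*)$, tested against $v$ using the rank-1 form of $G_\eps(E)$. Your only (harmless) streamlining is concluding $R=0$ from $x\parallel u$ alone, which also delivers realness of the multiple directly, whereas the paper additionally multiplies by $u^*$ on the left to get $y\parallel v$ and then argues that the complex multiple must coincide with the real one coming from $P_Y G_\eps(E)$.
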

\medskip

\begin{proof} Let $G=G_\eps(E)$ in this proof for short.

(a) By \eqref{stat-S}, $E=\mu^{-1} \Pi^\cS G$ for some nonzero real $\mu$. Then, $Y:=\mu^{-1}G$ is of rank 1 and we have $E=\Pi^\cS Y$. We further note that $P_Y G = \mu P_Y Y = \mu Y = G$. We thus have
\[
-P_Y  G + \Re\langle P_Y G, E \rangle Y  = -G + \Re\langle G,E \rangle Y.
\]
Here we find that
\[
 \Re\langle G,E \rangle= \Re\langle \Pi^\cS G,E \rangle =  \Re\langle \mu E,E \rangle = \mu \| E \|_F^2 = \mu.
\]
So we have
\[
-G + \Re\langle G,E \rangle Y = -G + \mu Y =0
\]
by the definition of $Y$. This shows that $Y$ is a stationary point of \eqref{ode-E-S-1}.

(b) We show that $Y$ is a nonzero real multiple of $G$. By Theorem~\ref{thm:stat}, 
$E$ is then a stationary point of the differential equation \eqref{ode-E-S}.

For a stationary point $Y$ of \eqref{ode-E-S-1}, 
we have that $P_Y(G)$ is a nonzero real multiple of $Y$. 
Hence, in view of $P_Y(Y)=Y$, we can write $G$ as
\[
G=\mu Y + W, \quad\text{ where $\mu\ne 0$ is real and $P_Y(W)=0$.}
\]
Writing the rank-1 matrix $Y=\rho uv^*$ with $\rho \ne 0$ and $\|u\|=\|v\|=1$, we then have by \eqref{PY} that
\[
 W=W-P_Y(W)= (I-uu^*)W(I-vv^*).
\]
 On the other hand, $G=2 \overline f_\lambda xy^*$  is also of rank 1. So we have
\[
2 \overline f_\lambda xy^* = \mu uv^* + (I-uu^*)W(I-vv^*).
\]
Multiplying from the right with $v$ yields that $x$ is a complex multiple of $u$, and multiplying from the left by 
$u^*$ yields that $y$ is a complex multiple of $v$. Hence, $G$ is a complex multiple of $Y$. Since we already know 
that $P_Y(G)$ is a nonzero real multiple of $P_Y(Y)=Y$, it follows that $G$ is the same real multiple of $Y$. 

Thus stationary points $Y\in\cM_1$ of the differential equation \eqref{ode-E-S-1} are characterized as real multiples of $G$. Hence,
$E=\Pi^\cS Y$ is a real multiple of $\Pi^\cS G$, and by Theorem~\ref{thm:stat}, $E=\Pi^\cS Y$ is a stationary point of \eqref{ode-E-S}.
\end{proof}

\subsection*{Possible loss of monotonicity}
Since the projections $\Pi^\cS$ and $P_Y$ do not commute, along solutions of \eqref{ode-E-S-1} we cannot guarantee the 
monotonicity property \eqref{eq:pos-S}  that we have for the constrained gradient system~\eqref{ode-E-S}.

However, in all our numerical experiments we  observed that  
starting with an initial datum given by the negative free gradient of the
considered functional \eqref{eq:freegrad}, i.e. $Y(0) = {}-G_\eps(0)$,
we always obtained a monotone convergence behavior to a (local) optimum. 
Only in very few cases, by starting from a randomly chosen initial datum, we were able 
to observe a nonmonotonic convergence. However the loss of monotonicity occurred only once,
after the first step, and monotonicity was recovered from the following step onwards. In the following section we will explain this behavior locally near a stationary point, but we have no theoretical explanation for the numerically observed monotonic behavior far from stationary points.


%
%
%

\subsection{Differential equations for the factors of rank-1 matrices}
\label{subsec:rank-1}

Equation \eqref{ode-E-S-1} is an abstract differential equation on the rank-1 manifold $\M_1$. 
We  write a rank-1 matrix $Y\in\cM_1$ in a non-unique way as
\[
Y=\rho uv^*,
\]
where $\rho\in \R,\ \rho>0$ and $u,v\in \C^n$ have unit norm. 

The following lemma shows how we can rewrite the rank-1 differential equation \eqref{ode-E-S-1}
in terms of differential equations for the  factors $u, v$ and an explicit formula for~$\rho$.

\begin{lemma}[Differential equations for the factors]
\label{lem:suv-1}
Every  solution $Y(t)\in \M_1$ of the rank-1 differential equation \eqref{ode-E-S-1} with $\| \Pi^\cS Y(t) \|_F=1$
can be written as $Y(t)=\rho(t)u(t)v(t)^*$ from the following differential equations for the factors $u$ and $v$ of unit norm,
\begin{align*}
\rho \dot u &=  -(I-uu^*) Gv - \displaystyle \tfrac\iu 2 \Im(u^*Gv)u,
\qquad
\\
\rho \dot v &=   -(I-vv^*) G^*u + \displaystyle \tfrac\iu 2 \Im(u^*Gv)v,
\end{align*}
where $G=G_\eps(E)$ for $E=\Pi^\cS Y = \rho\, \Pi^\cS (uv^*)$
and $\rho=1/\| \Pi^\cS(uv^*)\|_F$.
\end{lemma}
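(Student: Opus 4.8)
The plan is to derive the factor equations directly from the abstract rank-1 equation \eqref{ode-E-S-1} by decomposing the tangent vector $\dot Y$ in the coordinates induced by the parametrization $Y=\rho uv^*$. First I would differentiate this parametrization to obtain $\dot Y = \dot\rho\, uv^* + \rho\,\dot u v^* + \rho\, u\dot v^*$. Since $u$ and $v$ are constrained to have unit norm, I would record the orthogonality relations $\Re\langle u,\dot u\rangle=0$ and $\Re\langle v,\dot v\rangle=0$; these come from differentiating $\|u\|^2=1$ and $\|v\|^2=1$. The key structural fact I would exploit is that the three terms $\dot\rho\,uv^*$, $\rho\,\dot u v^*$, and $\rho\, u\dot v^*$ live in (partially overlapping) pieces of the tangent space, and that the projection formula \eqref{PY} gives a clean way to separate the components of $\dot Y$ that act on $u$ from those that act on $v$.

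The main analytical step is to match $\dot Y$ against the right-hand side of \eqref{ode-E-S-1}, namely $R:=-P_Y G + \Re\langle P_Y G, E\rangle\,Y$ with $G=G_\eps(E)$. Using $P_Y(Y)=Y$ and the constraint $\|E\|_F=1$, I expect the term $\Re\langle P_Y G,E\rangle\,Y$ to contribute only a multiple of $Y=\rho uv^*$, i.e. a pure scaling/phase contribution. To extract the $\dot u$ equation I would multiply the matched identity on the right by $v$ and use $\|v\|=1$; to extract $\dot v$ I would multiply on the left by $u^*$ (equivalently, apply the adjoint and multiply by $u$). The projector $I-uu^*$ appearing in $\rho\dot u$ is precisely what enforces $\Re\langle u,\dot u\rangle=0$, and similarly $I-vv^*$ for $\dot v$; I would verify these orthogonality conditions hold for the claimed right-hand sides. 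The term $\tfrac{\iu}{2}\Im(u^*Gv)$ is the phase ambiguity in the factorization $Y=\rho uv^*$: the decomposition $(u,v,\rho)\mapsto uv^*$ is invariant under $(u,v)\mapsto(\e^{\iu\theta}u,\e^{\iu\theta}v)$, so there is a one-parameter gauge freedom, and I would fix it by imposing that the imaginary parts of $u^*\dot u$ and $v^*\dot v$ are equal and opposite in the specific way that makes $\dot\rho$ real. This gauge choice is what produces the $\pm\tfrac{\iu}{2}\Im(u^*Gv)$ terms with opposite signs on $u$ and $v$.

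For the formula $\rho=1/\|\Pi^\cS(uv^*)\|_F$, I would use that $E=\Pi^\cS Y=\rho\,\Pi^\cS(uv^*)$ together with the conservation $\|E\|_F=1$ established just before the lemma statement; solving $\rho\,\|\Pi^\cS(uv^*)\|_F=1$ gives the claimed expression, and positivity of $\rho$ is consistent with the normalization $\rho>0$. The explicit algebraic formula for $\rho$ (rather than a differential equation for it) is natural because the norm constraint is holonomic: once $u$ and $v$ are known, $\rho$ is determined.

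The hard part will be the bookkeeping in the gauge-fixing: one must check that the specific $\Im(u^*Gv)$ correction terms are exactly those that keep $\rho$ real and positive while preserving $\|u\|=\|v\|=1$, and that the resulting $\rho\dot u$, $\rho\dot v$ reassemble into $P_Y$-projected directions consistent with \eqref{ode-E-S-1}. A clean way to organize this is to first compute $P_Y G$ explicitly via \eqref{PY}, writing $G=\mu_0 uv^* + (I-uu^*)Gvv^* + uu^*G(I-vv^*) + (I-uu^*)G(I-vv^*)$ where the last term is annihilated by $P_Y$; then the surviving pieces map transparently onto the $uv^*$, $\dot u\,v^*$, and $u\,\dot v^*$ slots, and matching coefficients yields the stated equations once the phase gauge is fixed.
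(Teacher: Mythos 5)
Your proposal is correct and follows essentially the same route as the paper's proof: the paper likewise gets $\rho=1/\|\Pi^\cS(uv^*)\|_F$ directly from $1=\|E\|_F=\rho\,\|\Pi^\cS(uv^*)\|_F$, expands $-P_YG+\Re\langle P_YG,E\rangle Y$ via \eqref{PY} into the slots $(\cdot)\,v^*$, $u\,(\cdot)^*$ and $uv^*$ (your four-piece decomposition of $G$, with $(I-uu^*)G(I-vv^*)$ annihilated by $P_Y$, is exactly this computation), and splits the scalar $u^*Gv$ so that the $\pm\tfrac{\iu}{2}\Im(u^*Gv)$ terms fix the common-phase gauge and make $\Re(u^*\dot u)=\Re(v^*\dot v)=0$, conserving the unit norms. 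The one small imprecision is attributing to the gauge choice the realness of $\dot\rho$: since $\dot\rho=\Re(u^*\dot Y v)$ is gauge-invariant, the symmetric splitting instead fixes the free parameter $\Im(u^*\dot u)+\Im(v^*\dot v)=0$, i.e.\ your ``equal and opposite'' condition, and this nuance does not affect the validity of the argument.
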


The positive factor $\rho$ on the left-hand sides of the differential equations for $u$ and $v$ only determines the 
speed with which  the trajectory is traversed, but has no influence on the trajectory itself. 

\begin{proof}
The equation for $\rho$ is obvious because
$1 = \| E \|_F = \rho \| \Pi^\cS(uv^*)\|_F$.
\\
We write the right-hand side of \eqref{ode-E-S-1} and use \eqref{PY} to obtain for $Y=\rho uv^*$
\begin{align*}
\dot Y&= -P_Y G + \Re\langle P_Y G,E \rangle Y \\
&= -\ (I-uu^*)Gvv^* - uu^* G(I-vv^*) - uu^*G vv^* + \Re\Big\langle P_Y G,E \Big\rangle Y\\
&= -\  \Bigl( (I-uu^*)Gvv^* + \tfrac\iu 2 \Im(u^*Gv)u \Bigr)v^*  
- u \Bigl( u^*G(I-vv^*) + \tfrac\iu 2 \Im(u^*Gv) v^* \Bigr) 
\\
&\quad\, - \Bigl( \Re (u^*Gv) +  \Re\langle P_Y G,E \rangle \rho \Bigr) uv^*.
\end{align*}
Since this is equal to $\dot Y = (\rho \dot u) v^* + u (\rho \dot v^*) + \dot \rho uv^*$, we can read off $\rho \dot u$, $\rho \dot v^*$ and $\dot\rho$ as the three terms in big brackets. This yields the stated differential equations for $u$ and~$v$ (and another one for $\rho$, which will not be needed). Note that $(d/dt)\| u\|^2 =2\,\Re(u^*\dot u)=0$ and analogously for $v$, so that the unit norm of $u$ and $v$ is conserved.
\end{proof}

We note that for $G=G_\eps(E)=2f_{\clambda}\,xy^*$ (see Lemma~\ref{lem:gradient}) and with $\alpha=u^*x$, $\beta=v^*y$
and $\gamma=2f_{\clambda}$, we obtain the differential equations 
\begin{equation}\label{ode-uv-short}
\begin{array}{rcl}
\rho \dot u &=&  \alpha\conj\beta\gamma\, u- \conj\beta\gamma \,x -\tfrac \iu2 \, \Im(\alpha\conj\beta\gamma)u
\\[3mm]
\rho \dot v &=&  \conj{\alpha}\beta\conj{\gamma}\,v -\conj{\alpha\gamma}\,y -\tfrac \iu2 \, \Im(\conj{\alpha}\beta\conj{\gamma})v.
\end{array}
\end{equation}

%
%
%

\subsection{Cases of interest} \label{sec:Suit}

The real dimension of the manifold of complex $n \times n$ rank-$1$ matrices of unit norm is $4 n -1$. 
Integrating \eqref{ode-E-S-1} instead of \eqref{ode-E-S} would be very appealing in those cases where $\dim \cS$ is significantly
larger than $4 n -1$. An important example is given by sparse matrices with a sparsity pattern with a number of nonzero elements of order $cn$ 
with $c > 4$ (and ideally much larger than $4$).

In the case of a real target eigenvalue the dimension of the manifold of real $n \times n$ rank-$1$ matrices of unit norm is $2 n -1$ so that 
for structured matrices it is meaningful to make use of \eqref{ode-E-S-1} only if $c > 2$.

Similarly, when considering matrices with prescribed range and co-range,
\[
\cS = \{ B \Delta C \,:\, \Delta \in \R^{k,l} \},
\]
where $B\in\R^{n,k}$ and $C\in\R^{l,n}$ with $k,l<n$, replacing the unknown matrix $\Delta$, which is a full $k \times l$ real matrix, 
by a rank-$1$ matrix, would significantly reduce the memory requirements when $k$ and $l$ are large. 
As for the computational cost, we may argue that the reduced number of variables may lead to faster convergence of the
numerical method.

On the contrary, for a real Toeplitz matrix, $\dim \cS = 2 n + 1$, which suggests to use \eqref{ode-E-S} in terms of the Toeplitz
coefficients, instead of the rank-1 differential equation \eqref{ode-E-S-1}. Instead, for a block Toeplitz matrix the use of \eqref{ode-E-S-1} appears preferable unless the blocks are very small.

\section{Local convergence to local minima of solutions to the rank-$1$ projected differential equation} \label{sec:locconv}

In this section we show that solutions of the rank-$1$ projected differential equation \eqref{ode-E-S-1} converge locally to strong local minima of the functional $F_\eps$. We first state the result, then formulate and prove a key lemma, and finally give the proof of the local convergence result.

\subsection{Statement of the local convergence result}

For the formulation of our local convergence result we need the following assumptions. Here, $\cS_1$ is the manifold of matrices in $\cS$ of unit Frobenius norm, and $\M_1$ is again the manifold of rank-1 matrices in $\C^{n,n}$. The first assumption is made on the structure space $\cS$. It excludes, in particular, spaces $\cS$ that are too low-dimensional: it requires 
$\text{dim}\,\cS \ge \text{dim}\, \M_1 = 4n-2$ (as before dim indicates the \emph{real} dimension of $\cS$).

\begin{assumption} 
\label{ass:S}
The restricted projection $\Pi^\cS\big|_{\M_1}: \cM_1 \to \Pi^\cS(\M_1) \subset \cS$ is a diffeomorphism, or equivalently:
\begin{itemize}
\item[(i)] If $E\in \Pi^\cS(\M_1)$, then there is a unique $Y\in\M_1$ such that $E=\Pi^\cS Y$.
\item[(ii)] The inverse map $E\mapsto Y$ is continuously differentiable.
\end{itemize}
\end{assumption}

\begin{remark} \rm
A comment on assumption \ref{ass:S} is helpful to understand it and justify it. Consider for example the structure $\cS$ of real matrices with a prescribed
sparsity pattern. Let $E \in \cS$ be given. Assume that for a given $\hat Y\in\M_1$ it holds that $E=\Pi^\cS \hat Y$. In principle in order 
to determine all solutions of the equation
\begin{equation}
\Pi^\cS Y = E
\end{equation}
we should form $Y = u v^*$ with $u,v \in \C^n$ and write a system of quadratic equations in the coefficients $\{ u_i \}_{i=1}^{n}$ and
$\{ v_j \}_{j=1}^{n}$ that reads
\begin{equation*}
\Re \left( u_i v_j^* \right) = E_{ij} \qquad \mbox{for all} \ i,j \in \cP
\end{equation*} 
where $\cP$ is the considered sparsity pattern, i.e. $\cP = \{ (i,j) : E_{ij} \not\equiv 0 \}$.

This gives a system of $p$ quadratic equations where $p = \#\cP$ is the number of entries of $E$ which are not prescribed to be zero.
In terms of the real variables $\Re(u_i), \Im(u_i), \Re(v_i), \Im(v_i)$ (indeed the first entry of $u$, if different from zero, can be 
chosen to be real and positive to guarantee uniqueness of the representation of $Y$ in terms of uniquely determined vectors $u$ and $v$, 
the previous is a system of $p$ quadratic equations in $4n-2$
variables so that we are allowed to generically expect the existence of the only solution $\hat Y$ if $p > 4n-2$.

We should also mention that the existence of a finite number of solutions would not affect our proof, but only the existence of a continuous
path of solutions in $\cM_1$ which would contradict the assumption that we make on the minimum, which we assume to be strong. 

About the usefulness of Assumption \ref{ass:S}, we note that if $p < 4n-2$ translating \eqref{ode-E-S} into a systems of ODE in terms of 
the $p$ nonzero entries of $E$ would be more convenient than integrating \eqref{ode-E-S}, which indicates that Assumption \ref{ass:S} is
very reasonable.
\end{remark} 
\medskip

The next assumption is made on the Hessian of the functional $F_\eps$ at a stationary point of the differential equation \eqref{ode-E-S}.

\begin{assumption} 
\label{ass:H}
Let $E_0\in \cS_1$ be a stationary point of the constrained gradient system \eqref{ode-E-S}. We assume that $E_0$ is a strong minimum of the functional $F_\eps$ on $\cS_1$, that is, the Hessian matrix $H_\eps(E_0)$ of $F_\eps$ at $E_0$ yields a positive definite quadratic form when restricted to the tangent space $T_{E_0}\cS_1$  of the manifold $\cS_1$ at $E_0$: there exists $\alpha>0$ such that
\begin{equation}\label{H-pd}
\langle Z, H_\eps(E_0) Z \rangle \ge \alpha \|Z\|^2 \qquad \forall\, Z\in T_{E_0}\cS_1.
\end{equation}
\end{assumption}

Under these assumptions we have the following result.

\begin{theorem}[Local convergence to a strong local minimum]
\label{thm:lc}
Under Assumption~{\rm \ref{ass:S}}, let  the rank-1 matrix $Y_0\in\M_1$
be a stationary point of the projected differential equation \eqref{ode-E-S-1} such that $E_0=\Pi^\cS Y_0\in \cS_1$ is of unit Frobenius norm and $P_{Y_0} G_\eps(E_0)\ne 0$.  We assume that $E_0$ satisfies Assumption~{\rm \ref{ass:H}}.

Then, for an initial datum $Y(0)$ sufficiently close to $Y_0$, the solution $Y(t)$ of \eqref{ode-E-S-1}
converges to $Y_0$ exponentially as $t \to \infty$. Moreover, $F_\eps \left( \Pi^\cS Y(t) \right)$ decreases monotonically with $t$ and converges exponentially to the local minimum value $F_\eps(E_0)$ as $t \to \infty$.
\end{theorem}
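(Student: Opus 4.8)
The plan is to transport the problem from the rank-1 manifold $\M_1$ to the structure manifold $\cS_1$ via the diffeomorphism of Assumption~\ref{ass:S}, establish exponential convergence there, and then pull the conclusion back. First I would invoke Theorem~\ref{thm:stat-S}(b): since $Y_0$ is a stationary point of \eqref{ode-E-S-1} with $P_{Y_0}G_\eps(E_0)\ne 0$, we know $E_0=\Pi^\cS Y_0$ is a stationary point of the constrained gradient flow \eqref{ode-E-S}, and $Y_0$ is a real multiple of $G_\eps(E_0)$. Assumption~\ref{ass:S} gives a continuously differentiable inverse $\psi:\Pi^\cS(\M_1)\to\M_1$, $E\mapsto Y$, locally near $E_0$, so near $Y_0$ the trajectory $Y(t)$ is in bijective smooth correspondence with $E(t)=\Pi^\cS Y(t)$, and $Y(t)\to Y_0$ exactly when $E(t)\to E_0$. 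The whole convergence analysis can therefore be carried out for $E(t)$ on the finite-dimensional Riemannian manifold $\cS_1$.

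Next I would linearize the dynamics of $E(t)$ at $E_0$. Along solutions of \eqref{ode-E-S-1} we have already shown that $\|E(t)\|_F=1$, so $E(t)$ stays on $\cS_1$ and $\dot E(t)\in T_{E_0}\cS_1$ to leading order. The key point is that \eqref{ode-E-S-1} is \emph{not} literally the gradient flow \eqref{ode-E-S} because $\Pi^\cS$ and $P_Y$ do not commute; the claim advertised in the abstract is that near a minimizer the projection $P_Y$ is \emph{close to the identity} on the relevant directions. I would make this precise by showing that at the stationary point the tangent space $T_{Y_0}\M_1$ contains the gradient direction $G_\eps(E_0)$ (since $Y_0\parallel G_\eps(E_0)$ and $P_{Y_0}G_\eps(E_0)=G_\eps(E_0)$ by the proof of Theorem~\ref{thm:stat-S}(b)), so that the vector field of \eqref{ode-E-S-1}, written in the $E$-variable through $\psi$, agrees with the gradient vector field $-G_\eps^\cS(E)+\Re\langle G_\eps^\cS(E),E\rangle E$ to first order at $E_0$. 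Consequently the Jacobian of the $E$-dynamics at $E_0$, restricted to $T_{E_0}\cS_1$, equals the Jacobian of the constrained gradient flow at its stationary point, up to a positive scalar from the reparametrization.

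I would then identify this Jacobian. For the constrained gradient system \eqref{ode-E-S}, $E_0$ is a critical point of $F_\eps$ on $\cS_1$, and the linearization of a Riemannian gradient flow at a critical point is $-H_\eps(E_0)$ restricted to $T_{E_0}\cS_1$ (together with the scalar $\eps\kappa>0$ and the reparametrization factor, all positive). By Assumption~\ref{ass:H} this restricted Hessian is positive definite with $\langle Z,H_\eps(E_0)Z\rangle\ge\alpha\|Z\|^2$, so all eigenvalues of the linearized $E$-dynamics are bounded above by a negative constant $-c<0$. Standard Lyapunov theory (or a strict Lyapunov function built from $F_\eps(E)-F_\eps(E_0)$, whose derivative along \eqref{ode-E-S-1} I would estimate directly using the first-order agreement above and the positive-definite Hessian to get $\frac{d}{dt}F_\eps(E(t))\le -c'\,(F_\eps(E(t))-F_\eps(E_0))$ locally) then yields exponential decay of $F_\eps(E(t))-F_\eps(E_0)$ and, via the quadratic lower bound, exponential convergence $E(t)\to E_0$, hence $Y(t)=\psi(E(t))\to Y_0$ exponentially by continuity of $\psi$. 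Monotone decrease follows because the Lyapunov derivative is negative throughout the neighborhood.

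The main obstacle is the step connecting \eqref{ode-E-S-1} to the true gradient flow: I must quantify the discrepancy introduced by $P_Y\ne I$ away from $Y_0$ and confirm it is higher order, so that it does not spoil the sign of the Lyapunov derivative or the linearization. Concretely, I expect to need a bound of the form $\|P_Y G_\eps(E)-G_\eps^\cS(E)\|=\bigl\|(\Pi^\cS-P_Y\Pi^\cS)(\cdots)\bigr\|=O(\|E-E_0\|)$ near the stationary point, obtained from the smoothness of $P_Y$ in $Y$, the smoothness of $\psi$, and the alignment $P_{Y_0}G_\eps(E_0)=G_\eps(E_0)$; establishing that this error is genuinely $o(1)$ relative to the gradient term in the right neighborhood is the delicate part, whereas the Hessian/Lyapunov argument afterward is routine.
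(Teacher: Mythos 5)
Your overall architecture coincides with the paper's: transport the dynamics to $\cS_1$ via the diffeomorphism of Assumption~\ref{ass:S}, view \eqref{ode-E-S-1} as a perturbation of the constrained gradient flow \eqref{ode-E-S}, and close the argument with the positive definite projected Hessian of Assumption~\ref{ass:H} via Lyapunov estimates. But there is a genuine gap in precisely the step you flag as delicate, and it is not a technicality of constants: the bound you anticipate, $\|P_Y G_\eps(E)-G_\eps^{\cS}(E)\| = O(\|E-E_0\|)$, is of the \emph{same} order as the gradient term itself, since $\widehat G(E)=\Pi^\cS G_\eps(E)-\Re\langle \Pi^\cS G_\eps(E),E\rangle E$ vanishes at $E_0$ and is therefore itself only $O(\|E-E_0\|)$ nearby. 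A first-order discrepancy with an uncontrolled constant $C$ enters the Lyapunov estimate as
\[
\tfrac12\,\tfrac{d}{dt}\,\|E-E_0\|^2 \;\le\; -\alpha\,\|\widehat\Pi_{E_0}(E-E_0)\|^2 \;+\; C\,\|E-E_0\|^2 \;+\; O(\|E-E_0\|^3),
\]
and if $C\ge\alpha$ neither the sign of the derivative nor your identification of the linearization with $-H_\eps(E_0)$ survives. What is actually needed — and what the paper isolates as its key Lemma~\ref{lem:loc} — is the \emph{quadratic} bound $\|P_Y G_\eps(E(Y))-G_\eps(E(Y))\| \le C\delta^2$ for $\|Y-Y_\star\|\le\delta$, which makes the perturbation genuinely higher order than the gradient term; the rest of the paper's proof is then the routine Hessian/Lyapunov argument you describe (carried out both for $\|E-E_\star\|^2$ and for $F_\eps(E(t))$).

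Your proposed justification for first-order agreement of the two vector fields is also insufficient as stated: the identity $P_{Y_0}G_\eps(E_0)=G_\eps(E_0)$ from Theorem~\ref{thm:stat-S}(b) gives agreement only \emph{at} the single point $Y_0$ — where both fields vanish anyway, $E_0$ being stationary — and says nothing about their derivatives. First-order agreement amounts to showing that $\tau\mapsto P_{Y(\tau)}G(\tau)-G(\tau)$ has vanishing derivative at $\tau=0$ along every smooth path $Y(\tau)=u(\tau)v(\tau)^*$ through $Y_0$, and this is exactly where the rank-1 structure must be exploited, as in the paper's proof of Lemma~\ref{lem:loc}: by \eqref{PY}, $G-P_Y G=(I-uu^*)G(I-vv^*)$, and at the stationary point $x$ is parallel to $u$ and $y$ to $v$, so in the expansion of $G(\tau)=2\overline{f_\lambda}(\tau)\,x(\tau)y(\tau)^*$ every first-order term ($\dot x\, y^*$, $x\,\dot y^*$, $\dot{\overline{f_\lambda}}\,xy^*$, and the projector-derivative terms hitting $xy^*$) is annihilated by one of the two complementary projectors, since $(I-uu^*)x=0$ and $y^*(I-vv^*)=0$; the eigenvector derivatives are controlled via the group inverse of $A+\eps E-\lambda I$, which is where simplicity of $\lambda$ enters. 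This cancellation is special to the rank-1 form of $G_\eps$ and does not follow from smoothness of $P_Y$ alone. Without this computation (or an equivalent) your plan does not close; with it, your argument becomes essentially the paper's proof.
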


Note that $E_0=\Pi^\cS Y_0\in \cS_1$ is a stationary point of \eqref{ode-E-S} by Theorem~\ref{thm:stat-S} (b). So the assumption on $E_0$ reduces to the condition \eqref{H-pd} on the Hessian $H_\eps(E_0)$. 

The proof of Theorem~\ref{thm:lc} will be given in Section~\ref{subsec:proof-lc}.

\subsection{A basic lemma}

The following remarkable lemma provides the key to the proof of Theorem~\ref{thm:lc}.

\begin{lemma} \label{lem:loc}
\begin{samepage}
Let $Y_\star\in \M_1$ with $E_\star=\Pi^\cS Y_\star \in \cS$ of unit Frobenius norm. Let $Y_\star$ 
be a stationary point of the rank-1 projected differential equation \eqref{ode-E-S-1}, 
with an associated target eigenvalue $\lambda$ of $A+\eps E_\star$ that is simple.
Let $\delta$ be a sufficiently small positive number. Then, there exists $\bar \delta>0$ such that for all positive $\delta\le \bar\delta$ and all $Y \in \cM_1$ with $\| Y-Y_\star \| \le \delta$ and $E(Y)=\Pi^\cS Y$ of unit norm,
we have
\begin{equation}
\| P_Y G_\eps\left(E(Y) \right) - G_\eps\left(E(Y) \right) \| \le C \delta^2 
\end{equation}
with $C$ independent of $\delta$.
\end{samepage}
\end{lemma}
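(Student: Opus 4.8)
The plan is to reduce the estimate to a product of two first-order quantities, each of which vanishes at the stationary point $Y_\star$. First I would use the explicit form of the tangent projection \eqref{PY}: since $P_Y(Z)-Z = -(I-uu^*)Z(I-vv^*)$ for $Y=\rho uv^*$, and since $G_\eps(E)=2 f_{\clambda}\,xy^*$ is a rank-1 matrix (Lemma~\ref{lem:gradient}), I would write
\[
P_Y G_\eps(E) - G_\eps(E) = -\,2 f_{\clambda}\,\bigl[(I-uu^*)x\bigr]\bigl[(I-vv^*)y\bigr]^*,
\]
using that $I-vv^*$ is Hermitian so that $y^*(I-vv^*)=\bigl[(I-vv^*)y\bigr]^*$. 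Because the Frobenius norm of a rank-1 matrix $ab^*$ equals $\|a\|\,\|b\|$ in terms of the Euclidean vector norms, this gives the factorized identity
\[
\| P_Y G_\eps(E) - G_\eps(E) \| = 2\,|f_{\clambda}|\; \|(I-uu^*)x\|\;\|(I-vv^*)y\|.
\]
Near $Y_\star$ the scalar $|f_{\clambda}|$ is bounded (it depends continuously on the simple eigenvalue $\lambda$, which depends continuously on $Y$), so the whole task reduces to showing that each of the two orthogonal-complement components $\|(I-uu^*)x\|$ and $\|(I-vv^*)y\|$ is $O(\delta)$.

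For this, the key is that both components vanish at $Y=Y_\star$. Indeed, since $Y_\star$ is a stationary point of \eqref{ode-E-S-1} with $E_\star=\Pi^\cS Y_\star$ of unit norm (and $P_{Y_\star}G_\eps(E_\star)\neq 0$, as in the hypotheses of Theorem~\ref{thm:lc}), Theorem~\ref{thm:stat-S}(b) tells us that $Y_\star$ is a nonzero real multiple of $G_\eps(E_\star)=2 f_{\clambda}\,x_\star y_\star^*$. Hence the left factor $u_\star$ of $Y_\star$ is parallel to $x_\star$ and the right factor $v_\star$ is parallel to $y_\star$, so that $(I-u_\star u_\star^*)x_\star=0$ and $(I-v_\star v_\star^*)y_\star=0$ (the phase ambiguity in $u_\star,v_\star$ is irrelevant, since the projectors $u_\star u_\star^*$ and $v_\star v_\star^*$ do not see it). Next I would argue that $Y\mapsto (I-uu^*)x$ is continuously differentiable on $\M_1$ near $Y_\star$: the rank-1 projectors are rational in $Y$, namely $uu^*=YY^*/\|Y\|^2$ and $vv^*=Y^*Y/\|Y\|^2$, hence smooth on $\M_1$; and $x=x(Y)$, $y=y(Y)$ are smooth because $E=\Pi^\cS Y$ is linear in $Y$ and the eigenvectors of a simple eigenvalue depend analytically on the matrix, which holds in a neighborhood of $Y_\star$ since simplicity is an open condition. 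A $C^1$ map vanishing at $Y_\star$ is Lipschitz there, so $\|(I-uu^*)x\|\le L_1\|Y-Y_\star\|\le L_1\delta$ and likewise $\|(I-vv^*)y\|\le L_2\delta$ whenever $\|Y-Y_\star\|\le\bar\delta$.

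Combining the two estimates in the factorized identity then yields $\|P_Y G_\eps(E)-G_\eps(E)\| \le C\delta^2$ with $C = 2\,(\sup|f_{\clambda}|)\,L_1 L_2$ independent of $\delta$, which is the assertion. I expect the only genuinely delicate point to be the smooth-dependence claim in the second paragraph: one must ensure that the eigenvector normalization \eqref{eq:scalxy} can be maintained in a $C^1$ fashion in $Y$ (handled by the phase-invariant reformulation through $uu^*$, $vv^*$ together with standard perturbation theory for simple eigenvalues) and that $E(Y)$ stays of unit norm and keeps $\lambda$ simple on the whole ball, both of which follow by continuity for $\bar\delta$ small enough. The conceptual heart of the lemma is simply the observation that the difference $P_Y G_\eps - G_\eps$ is a \emph{product} of two factors, each of which is first-order small because it measures the misalignment between the rank-1 factors of $Y$ and the eigenvectors of $A+\eps\Pi^\cS Y$, an alignment that is exact at the stationary point.
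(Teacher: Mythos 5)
Your proof is correct, and it takes a genuinely different route from the paper's. The paper argues by a first-order Taylor expansion along a smooth path $Y(\tau)=u(\tau)v(\tau)^*$ through the stationary point: using explicit derivative formulas for the eigenvectors of a simple eigenvalue (via the group inverse of $A+\eps E-\lambda I$), it expands $P_{Y(\tau)}\bigl(f_{\clambda}(\tau)\,x(\tau)y(\tau)^*\bigr)$ in $\tau$ and checks that its first-order term coincides with that of $f_{\clambda}(\tau)\,x(\tau)y(\tau)^*$ itself, so the difference is $\bigo(\tau^2)$; the alignment $u_\star\parallel x_\star$, $v_\star\parallel y_\star$ enters through the zeroth-order projectors in that expansion. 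You instead exploit the exact algebraic identity, valid since $G_\eps(E)=2f_{\clambda}\,xy^*$ has rank one,
\[
P_Y G_\eps(E) - G_\eps(E) \;=\; -(I-uu^*)\,G_\eps(E)\,(I-vv^*) \;=\; -\,2 f_{\clambda}\,\bigl[(I-uu^*)x\bigr]\,\bigl[(I-vv^*)y\bigr]^*,
\]
whose Frobenius norm factorizes as $2|f_{\clambda}|\,\|(I-uu^*)x\|\,\|(I-vv^*)y\|$, reducing the claim to showing that each misalignment factor is $\bigo(\delta)$; this follows because each is a phase-invariant quantity that vanishes at $Y_\star$ by the alignment from Theorem~\ref{thm:stat-S}\,(b) and depends in a $C^1$ way on $Y$ (smoothness of $uu^*=YY^*/\|Y\|_F^2$, $vv^*=Y^*Y/\|Y\|_F^2$, and of the eigenvectors of a simple eigenvalue). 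Your factorization buys three things: it needs only $C^1$ dependence of the eigentriple rather than explicit eigenvector-derivative formulas; it makes the uniformity of the constant $C$ over the whole ball $\|Y-Y_\star\|\le\delta$ transparent, whereas the paper's pathwise expansion leaves the uniformity of the $\bigo(\tau^2)$ remainder implicit; and it isolates the mechanism of the lemma — the defect is a \emph{product} of two first-order misalignments — which in the paper is hidden inside a cancellation of first-order terms. Both arguments ultimately rest on the same two pillars, namely the alignment of $(u_\star,v_\star)$ with $(x_\star,y_\star)$ at the stationary point and smooth dependence of the simple eigentriple. One remark: you correctly import the nondegeneracy $P_{Y_\star}G_\eps(E_\star)\ne 0$ from Theorem~\ref{thm:lc}; this hypothesis is absent from the lemma as stated but is genuinely needed (if $P_{Y_\star}G_\eps(E_\star)=0$, stationarity holds vacuously with no alignment and the conclusion fails), and the paper's proof uses it implicitly when writing $Y_\star=\alpha G_\star$ with real $\alpha$ — so flagging it explicitly, as you do, is a small improvement in rigor.
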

\begin{proof}
Let us consider a smooth path $Y(\tau) = u(\tau) v(\tau)^* \in \cM_1$ ($u(\tau), v(\tau) \in \C^n$) with 
\begin{align*}
& Y(0) = Y_\star = \alpha G_\star \quad \mbox{\rm for some real} \ \alpha \quad  \mbox{and} 
\\
& G_\star = G_\eps \left( E(Y(0)) \right) = 2 \overline f_\lambda xy^*,
\end{align*}
where $(\lambda,x,y)$ is the target eigentriplet
of $A + \eps \Pi^\cS Y(0)$ associated to the target eigenvalue~$\lambda$.

We indicate by $u,v,x,y$ and $\lambda$ (and later $\du,\dv,\dx$ and $\dy$) the associated functions of $\tau$ at $\tau=0$, i.e. in
correspondence of the stationary point.

We assume that $\| Y(\tau) - Y(0) \| \le \delta$ for $\tau \in [0,\delta]$ with $\delta$ such
that $\lambda(\tau)$ remains simple, and let 
\[
G(\tau) = G_\eps \left( E(Y(\tau)) \right) = 2 \overline f_\lambda(\tau)  x(\tau) y(\tau)^*.
\]

By the given assumptions all quantities are smooth w.r.t. $\tau$.
In particular, for a simple eigenvalue, under a smooth matrix perturbation, the derivatives of the associated eigenvectors
$\dx(\tau)$ and $\dy(\tau)$ - under the assumed normalization \eqref{eq:scalxy}  - are given by (see e.g. \cite{MS88,GL22})
\begin{equation*}\label{deigvec}
\begin{array}{rcl}
\displaystyle 
\frac{1}{\eps}\, \dx^*(\tau) & = & - x^*(\tau) \Pi_S \dot E(Y(\tau)) N(\tau)  + \Re\left(x(\tau)^* \Pi_S \dot E(Y(\tau)) N(\tau) x(\tau)\right) x(\tau)^*,
\\[3mm]
\displaystyle 
\frac{1}{\eps} \,\dy(\tau)  & = &  - N(\tau)   \Pi_S \dot E(Y(\tau)) y(\tau)  + \Re\left(y(\tau)^* N(\tau) \Pi_S \dot E(Y(\tau)) \right) y(\tau) ,
\end{array}
\end{equation*}
where  
$N(\tau)$ is the group inverse of  $ A + \eps \left( E(Y(\tau)) \right) - \lambda(\tau) I$. 
Note that by the simplicity of $\lambda$, $N(\tau)$ and thus also $\dx$ and $\dy$ as well as their derivatives are bounded.
%

With the formula \eqref{PY} for the projection $P_Y$, we thus have for $0 \le \tau \le \delta \ll 1$ the following first order expansion  (where the dot indicates here differentiation with respect to~$\tau$):
\begin{eqnarray}
&& P_{Y(\tau)} \Bigl( f_\clambda(\tau)  x(\tau) y(\tau)^* \Bigr) = \left( f_\clambda + \tau \dot{f}_\clambda \right) \, \cdot
\nonumber
\\[2mm]
&& 
\Biggl( \Bigl( x x^* + \tau \left(\du x^* + x \du^* \right) \Bigr)\, 
\Bigl( x y^* + \tau \left(\dx y^* + x \dy^* \right) \Bigr) +
\Biggr.
\nonumber
\\[2mm]
&& 
\Bigl( x y^* + \tau \left(\dx y^* + x \dy^* \right) \Bigr)\,
\Bigl( y y^* + \tau \left(\dv y^* + y \dv^* \right) \Bigr) -
\nonumber
\\[2mm]
&& 
\Biggl.
\Bigl( x x^* + \tau \left(\du x^* + x \du^* \right) \Bigr)\, 
\Bigl( x y^* + \tau \left(\dx y^* + x \dy^* \right) \Bigr)\,
\Bigl( y y^* + \tau \left(\dv y^* + y \dv^* \right) \Bigr) \Biggr) + \bigo(\tau^2) = 
\nonumber
\\
&& f_\clambda x y^* + \tau \left( \dot{f}_\clambda x y^* + f_\clambda x \dy^*  + f_\clambda \dx y^* \right) + \bigo(\tau^2).
\label{eq:firstorder}
\end{eqnarray}
Consequently, \eqref{eq:firstorder} has the same first order expansion as  
\begin{equation*}
f_\clambda(\tau)  x(\tau) y(\tau)^* = f_\clambda  x y^* + 
\tau \left( \dot{f}_\clambda x y^* + f_\clambda x \dy^*  + f_\lambda \dx y^* \right) + \bigo(\tau^2),
\end{equation*}
which yields the result. 
\end{proof}

\subsection{Proof of Theorem~\ref{thm:lc}}
\label{subsec:proof-lc}
With $E(t)=\Pi^\cS Y(t)\in \cS_1$, the differential equation \eqref{ode-E-S-1}  for $Y(t)\in \cM_1$ is equivalent to
$$
\dot E = - \Pi^\cS P_Y G_\eps(E) + \Re \langle  \Pi^\cS P_Y G_\eps(E), E \rangle E.
$$
By Lemma~\ref{lem:loc}, this can be rewritten as a perturbation to the constrained gradient system~\ref{ode-E-S} (recall that $G_\eps^\cS=\Pi^\cS G_\eps$):
$$
\dot E = - G_\eps^\cS(E) + \Re \langle G_\eps^\cS(E), E \rangle E + D
\qquad
\text{with}
\quad\
\| D(t) \| = \bigo( \| Y(t)-Y_\star \|^2).
$$
By Assumption~\ref{ass:S} (ii), this bound further implies
$$
\| D(t) \| = \bigo( \| E(t)-E_\star \|^2).
$$
The orthogonal projection $\widehat \Pi_E$ of $Z \in \C^{n,n}$ onto the tangent space $T_E\cS_1$ at $E\in \cS_1$ is given by
$$
\widehat \Pi_E Z = \Pi^\cS Z - \Re \langle \Pi^\cS Z, E \rangle E.
$$
We write 
\[
\widehat G(E) = \widehat \Pi_E G_\eps(E) = G_\eps^\cS(E) - \Re \langle G_\eps^\cS(E), E \rangle E 
\]
for short. We have
$$
\frac 12\, \frac{d}{d t}\, \| E(t)-E_\star \|^2 = \Re \langle E - E_\star, \dot E \rangle = \Re \langle E - E_\star, - \widehat G(E)+D \rangle.
$$
Since $\widehat G(E_\star)=0$ and 
\[
E-E_\star= \widehat \Pi_{E_\star}(E-E_\star) + \bigo(\| E- E_\star\|^2),
\]
which is due to the fact that both $E$ and $E_\star$ lie on $\cS_1$  so that letting $\delta := \| E - E_\star \|$ the orthogonal projection
$\widehat \Pi_{E_\star}(E-E_\star)$ onto the tangent plane at $E_*$ is $\delta^2$-close to $E-E_*$, we find
$$
\widehat G(E) = \widehat G(E) - \widehat G(E_\star) = \widehat \Pi_{E_\star} H_\eps(E_\star)\widehat \Pi_{E_\star} (E-E_\star) + \bigo(\| E-E_\star \|^2),
$$
where $H_\eps(E_\star)$ is the Hessian matrix of the functional $F_\eps$ at $E_\star$. By Assumption~\ref{ass:H}, $H_\eps(E_\star)$ is positive definite on $T_{E_\star}\cS_1$. So we obtain
\begin{eqnarray*}
& & \Re \langle E - E_\star, - \widehat G(E)+D \rangle 
\\
&=& \Re \langle \widehat \Pi_{E_\star}(E-E_\star) + \bigo(\| E- E_\star\|^2), -\widehat \Pi_{E_\star} H(E_\star)\widehat \Pi_{E_\star}(E-E_\star)+ \bigo(\| E- E_\star\|^2) \rangle
\\
&=&  - \langle \widehat \Pi_{E_\star}(E-E_\star), -H(E_\star)\widehat \Pi_{E_\star}(E-E_\star) \rangle + \bigo(\| E- E_\star\|^3)
\\
&\le & - \alpha \| \widehat \Pi_{E_\star}(E-E_\star) \|^2 + \bigo(\| E- E_\star\|^3)
\\
&\le & - \tfrac12\alpha \| E-E_\star \|^2,
\end{eqnarray*}
provided that $E$ is sufficiently close to $E_\star$. This yields that $\| E(t)-E_\star\|$ decreases monotonically with growing $t$ and converges exponentially fast to $0$ as $t\to\infty$.

Similarly we obtain, with the projected Hessian $\widehat H (E_\star) = \widehat \Pi_{E_\star} H_\eps(E_\star)\widehat \Pi_{E_\star}$ for short,
\begin{eqnarray*}
& & \frac 1{\kappa\eps} \, \frac{d}{dt} F_\eps(E(t)) = \Re \langle G_\eps(E),\dot E \rangle = \Re \langle \widehat G(E),\dot E \rangle
= \Re \langle \widehat G(E),- \widehat G(E)+D \rangle
\\
& = &- \| \widehat H(E_\star) \widehat \Pi_{E_\star}(E-E_\star) \|^2 +  \bigo(\| E- E_\star\|^3)
\\
& \le & - \alpha^2 \| \widehat \Pi_{E_\star}(E-E_\star) \|^2 +  \bigo(\| E- E_\star\|^3)
\\
& \le & - \tfrac12\alpha^2 \| E-E_\star \|^2,
\end{eqnarray*}
provided that $E$ is sufficiently close to $E_\star$. We conclude that $F_\eps(E(t))$ decreases monotonically and exponentially to $F_\eps(E_\star)$.
\qed

\section{Numerical integration by a splitting method}
\label{sec:proto-num}
%
We need to integrate numerically the differential equations (\ref{ode-uv-short}).
The objective here is not to follow a particular trajectory accurately, but to arrive quickly at a stationary point.
The simplest method is the normalized Euler method, where the result after an Euler step (i.e., a steepest descent step) is normalized to unit norm for both the $u$- and $v$-component.
This can be combined with a strategy to determine the step size adaptively. 
We found, however, that a more efficient method is obtained with a  splitting method instead of the Euler method.

\subsection{Splitting}
The splitting method consists of a first step applied to the differential equations
\begin{equation}\label{ode-uv-horiz}
\begin{array}{rcl}
 \rho\dot u &=&  \alpha\conj\beta\gamma\, u- \conj\beta\gamma \,x  
\\[3mm]
 \rho\dot v &=&  \conj{\alpha}\beta\conj{\gamma}\,v -\conj{\alpha\gamma}\,y  
\end{array}
\end{equation}
followed by a step for the differential equations
\begin{equation}\label{ode-uv-rot} 
\begin{array}{rcl}
 \rho\dot u &=& -\displaystyle \tfrac \iu2 \, \Im(\alpha\conj\beta\gamma)u 
\\[2mm]
 \rho\dot v &=& + \displaystyle \tfrac \iu2 \, \Im(\alpha\conj\beta\gamma) v.
\end{array}
\end{equation}
Note that the second differential equation is a mere rotation of $u$ and $v$.

As is very unusual, this splitting method preserves stationary points. 

\begin{lemma}[Stationary points] \label{lem:stat-split}
If $(u,v)$ is a stationary point of the differential equations \eqref{ode-uv-short}, then it is also a stationary point of the differential equations
\eqref{ode-uv-horiz} and \eqref{ode-uv-rot}.
\end{lemma}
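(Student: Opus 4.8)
The plan is to exploit an orthogonality structure that is built into the splitting. A stationary point of \eqref{ode-uv-short} is a pair $(u,v)$ of unit vectors for which both right-hand sides vanish. Each of those right-hand sides is, by inspection, the sum of the corresponding right-hand side of the horizontal system \eqref{ode-uv-horiz} and that of the rotation system \eqref{ode-uv-rot}. I would therefore introduce, for the $u$-equation, the horizontal part $h_u = \alpha\conj\beta\gamma\,u - \conj\beta\gamma\,x$ and the rotation part $r_u = -\tfrac\iu2\,\Im(\alpha\conj\beta\gamma)\,u$, so that stationarity of \eqref{ode-uv-short} reads $h_u + r_u = 0$, and likewise for $v$. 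The goal is to show that $h_u$ and $r_u$ vanish \emph{separately}, which is exactly the assertion that $(u,v)$ is stationary for both \eqref{ode-uv-horiz} and \eqref{ode-uv-rot}.

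The key computation is that $h_u$ is Hermitian-orthogonal to $u$, while $r_u$ is parallel to $u$. For the first, using $u^*u=1$ and $u^*x=\alpha$,
\[
u^* h_u = \alpha\conj\beta\gamma\,(u^*u) - \conj\beta\gamma\,(u^*x) = \alpha\conj\beta\gamma - \alpha\conj\beta\gamma = 0 .
\]
For the second, $r_u$ is by construction a (purely imaginary) scalar multiple of $u$, hence lies in $\mathrm{span}_\C(u)$, whereas $h_u\in u^\perp$. Since their sum is zero and the two summands lie in complementary subspaces, both must vanish: taking the inner product of $h_u+r_u=0$ with $u$ gives $-\tfrac\iu2\,\Im(\alpha\conj\beta\gamma)=0$, so $r_u=0$, and then $h_u=0$. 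This settles the $u$-component.

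The $v$-component is entirely analogous: with $h_v = \conj{\alpha}\beta\conj{\gamma}\,v - \conj{\alpha\gamma}\,y$ one checks $v^* h_v = \conj{\alpha}\beta\conj{\gamma} - \conj{\alpha}\conj{\gamma}\beta = 0$ from $v^*v=1$ and $v^*y=\beta$, while the rotation part is again a multiple of $v$, forcing both to vanish by the same orthogonal-decomposition argument. One bookkeeping point I would verify is the sign consistency between the rotation term as written in \eqref{ode-uv-short} and in \eqref{ode-uv-rot}, which follows from $\Im(\conj{\alpha}\beta\conj{\gamma}) = \Im\bigl(\conj{\alpha\conj\beta\gamma}\bigr) = -\Im(\alpha\conj\beta\gamma)$. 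I do not expect a genuine obstacle here; the only thing to recognize is that the horizontal field is tangential (orthogonal to the factor) while the rotation field is normal (parallel to it), which makes the decomposition of the vanishing right-hand side forced. It is worth noting that the identity $u^*h_u=0$ is the very same one guaranteeing that the horizontal flow preserves $\|u\|=1$, so its reappearance here is natural rather than accidental.
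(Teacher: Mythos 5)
Your proof is correct, and it takes a genuinely different route from the paper's. The paper argues via proportionality: from stationarity of \eqref{ode-uv-short} it deduces that $u$ is proportional to $x$ and $v$ to $y$, so that $x=\alpha u$ and $y=\beta v$, substitutes this into \eqref{ode-uv-horiz} to see its right-hand side vanish, and then gets stationarity of \eqref{ode-uv-rot} by difference, since \eqref{ode-uv-short} is the exact sum of the two split systems. You instead decompose the vanishing vector field into its component orthogonal to $u$ (your $h_u$, with $u^*h_u=0$ following from $u^*x=\alpha$) and its component parallel to $u$ (your $r_u$), and invoke the directness of $\C^n=\mathrm{span}_\C(u)\oplus u^\perp$ to force both to vanish separately; likewise for $v$. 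Your route buys robustness: extracting $x\propto u$ from $\rho\dot u=0$, as the paper implicitly does, requires $\conj\beta\gamma\ne 0$ (i.e., $v^*y\ne 0$ and $f_\clambda\ne 0$), whereas your orthogonality argument needs no such nondegeneracy, and it also explains \emph{structurally} why the splitting preserves stationary points --- the split is precisely the tangential/normal decomposition of the field relative to the unit spheres, the same identity that makes the horizontal flow norm-preserving, as you observe. The paper's route buys the explicit characterization $x=\alpha u$, $y=\beta v$ at stationary points, which is the geometrically informative fact (consistent with Theorem~\ref{thm:stat-S}) and is reused in Section~\ref{sec:proto-num} to motivate the splitting, where the near-stationary motion is argued to be almost purely rotational. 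Your sign check $\Im(\conj{\alpha}\beta\conj{\gamma})=-\Im(\alpha\conj\beta\gamma)$, confirming that \eqref{ode-uv-short} really is the sum of \eqref{ode-uv-horiz} and \eqref{ode-uv-rot}, is needed and correct.
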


\begin{proof} If $(u,v)$ is a stationary point of \eqref{ode-uv-short}, then $u$ is proportional to $x$ and $v$ is proportional to $y$. Hence,
$x=\alpha u$ and $y=\beta v$. This implies that $(u,v)$ is a stationary point of \eqref{ode-uv-horiz}, and hence also of  \eqref{ode-uv-rot}.
\end{proof}

\subsection{Fully discrete splitting algorithm}
Starting from vectors $u_k,v_k$ of unit norm and
\begin{equation}\label{rho-k}
\rho_k = \frac1{ \| \Pi^\cS(u_kv_k^*)\|_F}, 
\end{equation}
we denote by $x_k$ and $y_k$ the left and right eigenvectors to the target 
eigenvalue $\lambda_{k}$ of $A+\eps \rho_k \Pi^\cS(u_k v_k^*)$, and set 
\begin{equation}\label{alpha-beta-gamma-n}
\alpha_k=u_k^*x_k, \qquad \beta_k=v_k^*y_k, \qquad \gamma_k = 2f_{\clambda_k}. 
\end{equation}
We apply the Euler method with step size $h$  to \eqref{ode-uv-horiz} to obtain
\begin{equation}
\label{eul-horiz}
\begin{array}{rcl}
{\widehat u}(h) &=& u_k + (h/\rho_k)\left( \alpha_k\conj\beta_k\gamma_k\, u_k - \conj\beta_k\gamma_k \,x_k \right)
\\[2mm]
{\widehat v}(h) &=& v_k + (h/\rho_k)\left( \conj\alpha_k\beta_k\conj\gamma_k\,v_k -\conj{\alpha_k\gamma_k}\,y_k \right),
\end{array}
\end{equation}
followed by a normalization to unit norm 
\begin{equation} \label{eq:normal}
\widetilde u(h)=\frac{\widehat u(h)}{\|\widehat u(h)\|},\quad
\widetilde v(h)=\frac{\widehat v(h)}{\|\widehat v(h)\|}.
\end{equation}


Then, as a second step, we integrate the  rotating differential equations \eqref{ode-uv-rot} by setting, 
with $ \vartheta = -\displaystyle \tfrac 1{2\rho_k}\, \Im\! ( \alpha_k\conj{\beta_k}\gamma_k )$,
\begin{equation} \label{eq:rotate}
u(h)=\e^{\iu \vartheta h} \, \widetilde u(h), \qquad
v(h)=\e^{{}-\iu \vartheta h} \, \widetilde v (h),
\end{equation} 
set
$\rho(h) = 1/{\| \Pi^\cS( u(h) v(h)^*) \|_F}$,
and compute the target eigenvalue $\lambda(h)$ of the perturbed matrix $A+\eps \rho(h)\Pi^\cS\bigl(u(h)v(h)^*\bigr)$.
We note that this fully discrete algorithm still preserves stationary points. 
%
%

One motivation for choosing this method is that near a stationary point, the motion is almost 
rotational since $x \approx \alpha u$ and $y \approx \beta v$. 
The dominant term determining the motion is then the rotational term on the right-hand side of 
(\ref{ode-uv-short}), which is integrated by a rotation in the above scheme (the integration would be exact if $\alpha,\beta,\gamma$ were constant). 

This algorithm requires in each step one computation of target eigenvalues and associated eigenvectors
of structure-projected rank-$1$ perturbations to the matrix $A$, which can be computed at moderate computational cost for 
large sparse matrices $A$ by using an implicitly restarted Arnoldi method 
(as implemented in ARPACK 
and used in the MATLAB function {\it eigs} \cite{LeSY98}).
%
%

\medskip
\begin{algorithm}[H] \label{alg_prEul}
\DontPrintSemicolon
\KwData{$A, \eps, \theta > 1, u_k \approx u(t_k), v_k \approx v(t_k)$, $h_{k}$ (proposed step size),\\
$\lambda_k$ (target eigenvalue of $A+ \Delta_k$ with $\Delta_k =\eps \Pi^\cS(u_k v_k^*)/\|\Pi^\cS(u_k v_k^*)\|_F)$}
\KwResult{$u_{k+1}, v_{k+1}$, $h_{k+1}$, $\lambda_{k+1}$}
\Begin{
\nl Initialize the step size by the proposed step size, $h=h_{k}$\; 
\nl Compute left/right eigenvectors 
$x_k, y_k$ of $A + \Delta_k$ to $\lambda_k$ such that $\| x_k \| = \| y_k \| = 1, x_k^* y_k > 0$\; 
\nl Compute $\alpha_k,\beta_k,\gamma_k$ by \eqref{alpha-beta-gamma-n} and $g_k$ by \eqref{g-formula}\;
\nl Initialize $f(h) = f_k$\;
\While{$f(h) \ge\max( f_k, f_k-h\theta g_k)$}{
\nl Compute $u(h), v(h)$ according to \eqref{eul-horiz}-\eqref{eq:rotate}\;
\nl Compute $\Delta(h)= \eps\rho(h) \Pi^\cS(u(h) v(h)^*)$ with $\rho(h)=1/\|\Pi^\cS(u(h) v(h)^*)\|_F$\;
\nl Compute $\lambda(h)$ target eigenvalue of $A + \Delta(h)$\; 
\nl Compute the value $f(h) = f\bigl( \lambda(h), \conj{\lambda(h)} \bigr)$\;
\If{$f(h) \ge \max(f_k,f_k-h\theta g_k)$}{Reduce the step size, $h:=h/\theta$}
}
\uIf{$\bigl(g_k\ge 0\ \text{\rm and}\ f(h) \ge f_k- (h/\theta) g_k\bigr) \ \text{\rm or } \bigl(g_k< 0\ \text{\rm and}\ f(h) \ge f_k- h\theta g_k\bigr)$}{Reduce the step size for the next step, $h_{\rm next}:=h/\theta$}
\uElseIf{$h=h_k$}{Set $h_{\rm next} := \theta h_k$ (augment the stepsize if no rejection has occurred)}
\Else{Set $h_{\rm next} := h_k$}
\nl Set $h_{k+1} := h_{\rm next}$, $\lambda_{k+1} := \lambda(h)$, and the starting values for the next step as 
$u_{k+1} := u(h)$, $v_{k+1} := v(h)$\;
\Return
}
\caption{Integration step for the rank-1 differential equation}
\end{algorithm}

\subsection{Step-size selection} 


We use an  Armijo-type line search strategy, adapted to the possibility that the functional $f(\lambda,\clambda)$ is not everywhere  reduced along the flow of the differential equation \eqref{ode-E-S-1} (even though this was never observed in our numerical experiments when we chose
the initial value $Y(0)$ as a positive multiple of the negative free gradient $-2 f_{\clambda} \, x y^*$ where $(\lambda,x,y)$ is the target eigentriplet of the matrix $A$).
By Lemma~\ref{lem:gradient}, the change of the functional along solutions of \eqref{ode-E-S-1} equals (with $G=G_\eps(E))$ and omitting the argument $t$ on the right-hand side)
\begin{align}
\frac{d}{dt} F_\eps(E(t)) &=\eps\kappa \, \Re\langle G_\eps(E),\dot E \rangle \nonumber
\\
&= -\eps\kappa \Bigl(  \Re\langle \Pi^\cS G, \Pi^\cS P_Y G \rangle - \Re\langle \Pi^\cS P_Y G, E\rangle\, \Re\langle \Pi^\cS G, E \rangle\Bigl) = : -g
\label{g-formula}
\end{align}
We write $g_k=g$ for the choice $E=E_k=u_k v_k^*$, $G=G_\eps(E_k)= 2f_{\clambda}(\lambda_k,\conj{\lambda_k})x_k y_k^*$, and $\kappa=\kappa_k=1/(x_k^*y_k)$.
Let 
$$
f_k = f(\lambda_k,\conj{\lambda_k}), \qquad f(h) = f(\lambda(h),\conj{\lambda(h)}).
$$
We accept the result of the step with step size $h$ if, for a given parameter $\theta >1$,
$$
f(h) < \max( f_k, f_k - h\theta g_k).
$$
If 
$
g_k\ge 0\text{ and } f(h) \ge f_k - (h/\theta) g_k,
$
or if
$
g_k < 0 \text{ and } f(h) \ge f_k - h\theta g_k,
$
then we reduce the step size for the next step to $h/\theta$. If the step size has not been reduced in the previous step, we try for a larger step size.
Algorithm \ref{alg_prEul} describes
the step from $t_{k}$ to $t_{k+1} = t_{k}+h_{k}$. 

\section{Application to structured matrix nearness problems} 
\label{sec:stabrad}

We consider  matrix nearness problems that are closely related to the eigenvalue optimization problems considered in this article. 
We pose the problem in the structure space $\cS$.
Let again $ A \in \C^{n,n}$ be a given matrix  and let $\lambda( A)\in\C$ be a target eigenvalue of~$ A$. We again consider the  smooth function
$f(\lambda,\clambda)$ satisfying \eqref{ass:f} that is to be minimized.
For a prescribed real number $r$ in the range of $f$ we assume that
$$
f(\lambda( A),\clambda( A)) > r,
$$
so that for sufficiently small $\eps>0$ we have $\ophi(\eps)>r$, where
\[
\ophi(\eps) :=\min\limits_{\Delta \in \cS,\, \| \Delta \|_F = \eps} f \left( \lambda\left(  A + \Delta \right), \clambda \left(  A + \Delta \right)  \right).
\]
The objective now is to find the smallest $\eps>0$ such that  $\ophi(\eps)=r$: 
\begin{equation}
\oeps = \min\bigl\{\eps > 0 \,:\, 
\ophi(\eps) \le r \bigr\}.
\label{eq:mnpb}
\end{equation}
Determining $\oeps$ is a one-dimensional root-finding problem for the function $\ophi$ that is defined by the considered 
eigenvalue optimization problem.

\subsection{Structured distances to singularity and  to instability} 
Let us consider two examples, with a peculiar difference. In the first case the problem reduces to the
search of the simple (unique) zero of a smooth function, while in the second case the function is not
smooth at its smallest zero, and (generically) vanishes identically right to it.  
\begin{example}[Structured distance to instability]
\label{ex:1}
Let $A$ be a Hurwitz matrix, i.e. with negative spectral abscissa
$\alpha( A)= \max\{\Re\, \lambda\,:\, \lambda \text{ is an eigenvalue of $A$} \} < 0$.
With the function 
$
f ( \lambda, \clambda) = -\tfrac12(\lambda + \clambda) = -\Re\, \lambda
$
and the target eigenvalue $\lambda$ given by the eigenvalue of largest real part, and $r=0$,
we arrive at the problem of computing the structured distance to instability of $A$ :
\begin{equation*}
\oeps = \min\{\eps > 0 \,:\, \alpha_\eps^\cS( A) = 0 \},
\quad \mbox{\rm where} \quad
\alpha_\eps^\cS( A) = \max\limits_{E \in \cS, \| E \|_F = 1} \alpha( A + \eps E)
\end{equation*}
is the $\eps$-pseudospectral abscissa with respect to the structure space $\cS$.
\end{example}

%

\begin{example}[Structured distance to singularity] 
\label{ex:2}
Let $A$ be a nonsingular matrix.
With $f( \lambda, \clambda) = \lambda\clambda = |\lambda|^2$  
and the target eigenvalue $\lambda$ given by the eigenvalue of smallest modulus, we arrive at 
the problem of computing the distance to singularity of $A$ :
\begin{equation*}
\oeps = \min \{\eps > 0 \,:\, \varrho_\eps^\cS( A) = 0 \},
\quad \mbox{\rm where} \quad
\varrho_\eps^\cS( A)= \min\limits_{E \in \cS, \| E \|_F = 1} \varrho( A + \eps E)
\end{equation*} 
where $\varrho(M)$ is the smallest modulus of eigenvalues of a matrix $M$.\footnote{Instead 
of eigenvalues of smallest modulus, we could take the smallest singular value.}
\end{example}

\subsection{Two-level iterative method}
\label{sec:two-level}

As in previous work (see e.g. \cite{GKL15,G16}), we use a two-level method:
\begin{itemize}

\item[(i) ] {\bf Inner iteration:\/} Given $\eps>0$, we aim to compute a  matrix $E(\eps) \in\cS$  
of unit Frobenius norm,  such that 
$F_\eps(E) = f \left( \lambda\left(  A + \eps E \right), \clambda \left(  A + \eps E \right)  \right)$ 
is minimized: 
\begin{equation} \label{E-eps-2l}
E(\eps) = \arg\min\limits_{E \in \cS, \| E \|_F = 1} F_\eps(E).
\end{equation}


\item[(ii) ] {\bf Outer iteration:\/} We compute the smallest positive value $\oeps$ with
\begin{equation} \label{eq:zero}
\ophi(\oeps)= r,
\end{equation}
where $\ophi(\eps)= F_\eps\left(E(\eps) \right) = f \left( \lambda\left(  A + \eps E(\eps) \right), \clambda \left(  A + \eps E(\eps) \right)  \right)$.
\end{itemize}

\subsection{Inner iteration}

The eigenvalue optimization problem \eqref{E-eps-2l} is precisely of the type studied in the previous sections.
To compute $E(\eps)$ for a given $\eps>0$, we integrate numerically either the ODE system \eqref{ode-E-S}
or \eqref{ode-E-S-1}; see Section \ref{sec:proto-num}.

The computational cost can be significantly reduced if we are able to compute efficiently $\Pi^\cS (Y)$
and the matrix vector multiplication $\Pi^\cS (Y) v$ (with $v \in \C^n$) which is typically used by an 
iterative eigensolver applied to $A + \eps \Pi^\cS (Y)$. 
This is true for example when $\cS$ is the set of matrices with a prescribed sparsity pattern.

\subsection{Outer iteration}

The outer iteration determines
the smallest positive solution of the one-dimensional root-finding 
problem \eqref{eq:zero}.
We make use of a locally quadratically convergent Newton-type method, 
which can be justified under appropriate regularity assumptions.
 It turns out that the derivative of $\ophi$ is then simply
\begin{equation} \label{eq:dphi}
\ophi'(\eps)= - \| \Pi^\cS G_\eps(E(\eps)) \|_F / (x(\eps)^*y(\eps)), 
\end{equation}
where $x(\eps)$ and $y(\eps)$ with $x(\eps)^*y(\eps)>0$ are the eigenvectors to the (simple) target eigenvalue $\lambda(\eps)$ of $A+\eps E(\eps)$; cf.~\cite{G16,GL17} for related derivative formulas. 
If the assumptions justifying this formula are not met, we can always resort to bijection. The algorithm we use is indeed a combined 
Newton / bisection approach, similar to \cite{GKL15,G16,GL17}.

\section{Illustrative examples} \label{sec:ill}

In this section we show the behavior of Algorithm \ref{alg_prEul}, which is based on the rank-1 differential equation \eqref{ode-E-S-1}, on a few interesting examples: two sparse matrices and an example with prescribed range and corange.

%
%



We start by considering two well-known sparse matrices.

\subsection{The matrix ORANI678 from the Harwell Boeing collection}

\begin{figure}[ht] \label{fig:Orani1}
\vspace{-4.6cm}
\centerline{
\includegraphics[scale=0.45]{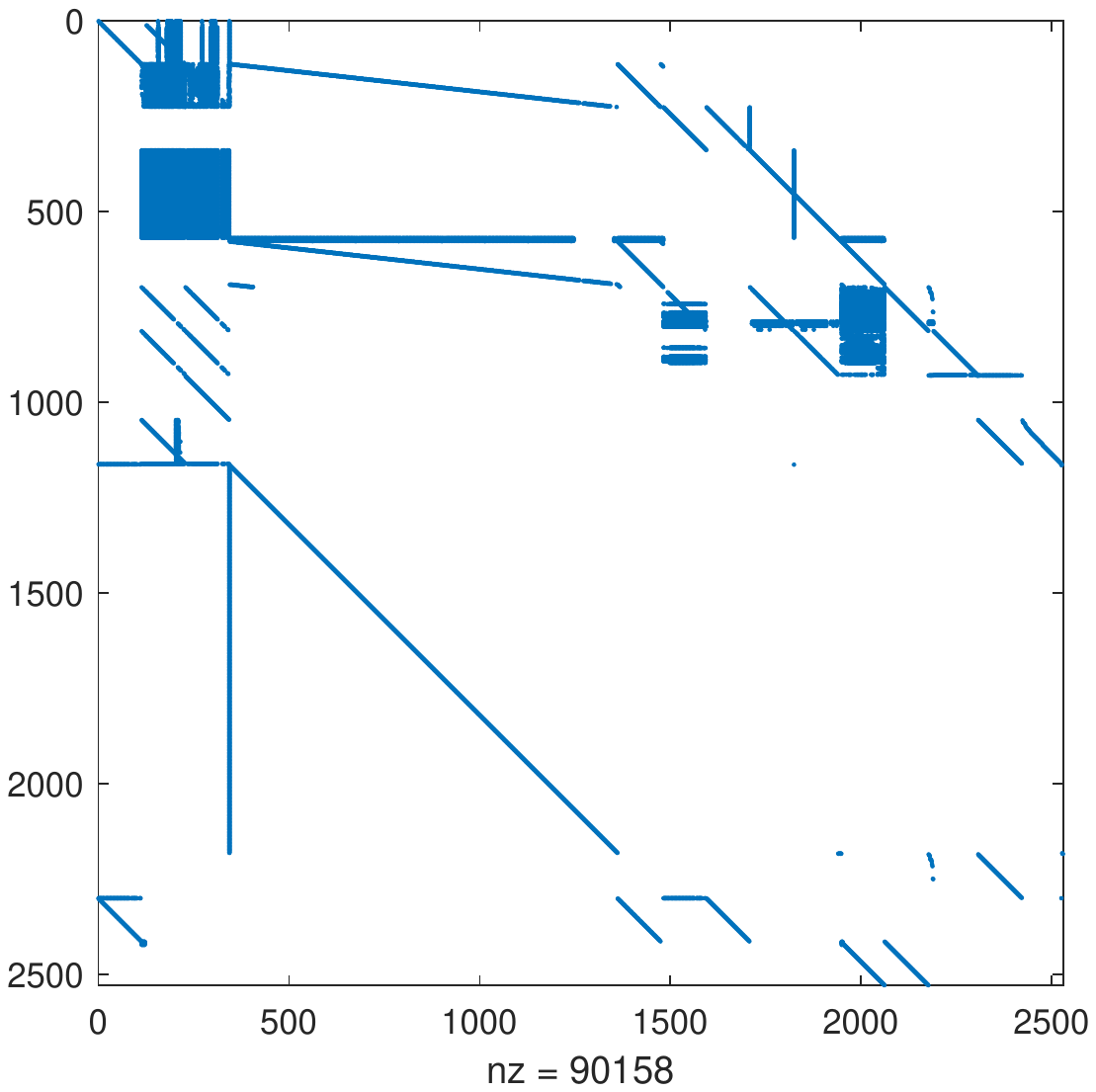}
\hspace{-28mm} 
\includegraphics[scale=0.45]{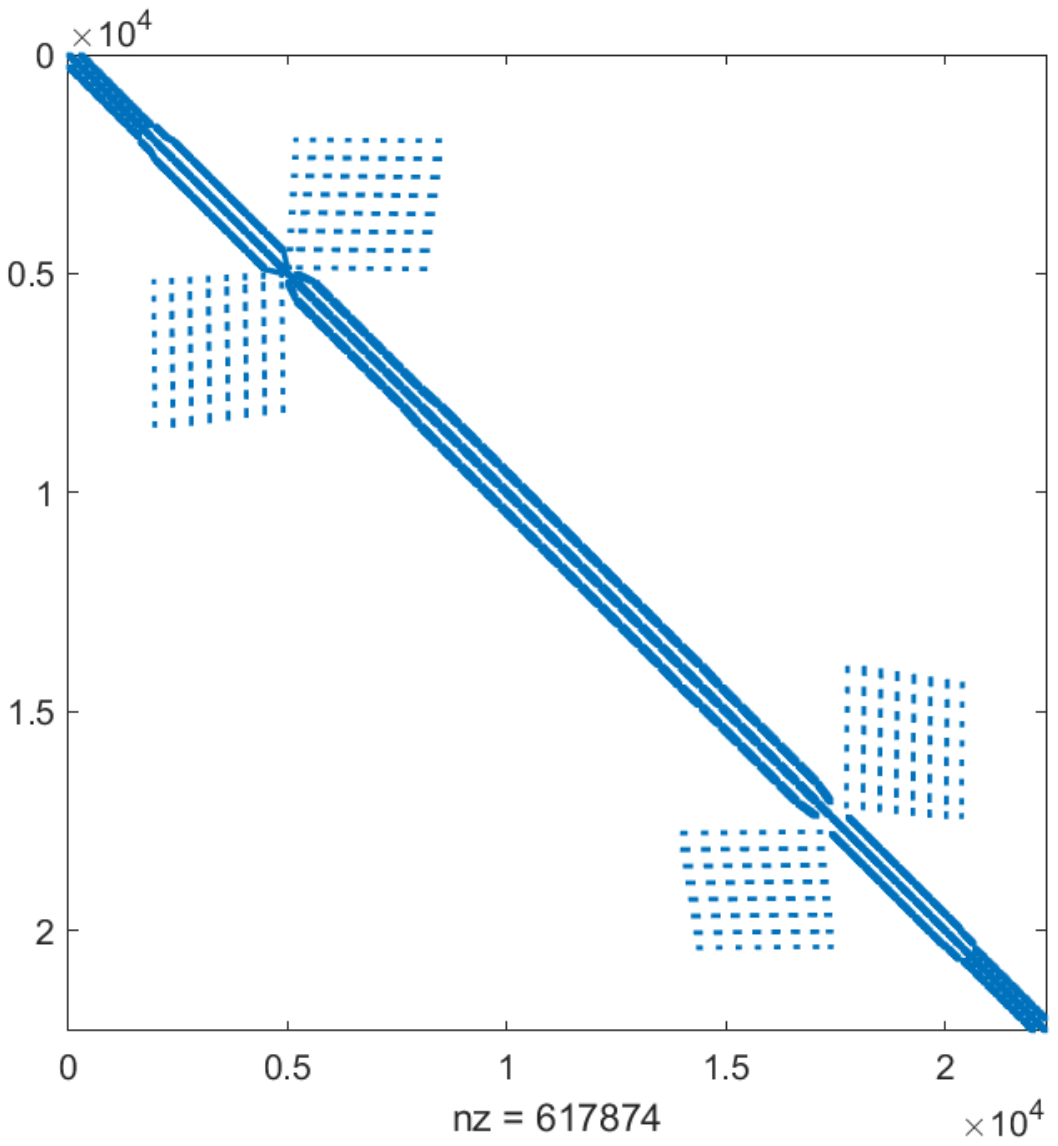}
}
\vspace{-3.6cm}
\caption{Sparsity patterns of the matrices ORANI678 (left) and FIDAPM11 (right).}
\end{figure}

The matrix $A$ is a sparse real unsymmetric square matrix taken from the set ECONAUS. It has dimension $n = 2529$ and a number
of nonzero entries $nz = 90158 \approx 40 n$.  Its sparsity pattern is plotted in Figure \ref{fig:Orani1}.

(i) 
We have set $\eps=1$ and applied our algorithms to the minimization problem \eqref{eq:optimizS} with
$
f(\lambda,\clambda) = -\frac12({\lambda+\clambda}) = - \Re(\lambda) 
$ 
and $\cS$ the space of real matrices with the sparsity pattern of $A$.
The target eigenvalue is the one with largest real part. We thus aim to compute
the \emph{structured $\eps$-pseudospectral abscissa} of $A$.


\begin{table}[th]
\begin{center}
\begin{tabular}{|l|l|}\hline
  $k$ &  $\Re(\lambda_k)$ \\
 \hline
\rule{0pt}{9pt}
\!\!\!\! 
$0$           &  $-1.232670912085709$ \\
$1$          &  $-1.745212357950066$ \\
$2$        &  $-1.917229680782718$ \\
$3$       &  $-2.076407232182272$ \\ 
$4$      &  $-{\bf 2}.249359154133923$ \\
$5$     &  $-{\bf 2.3}43018078428841$ \\
$6$     &  $-{\bf 2.3}43036033336665$ \\
$7$     &  $-{\bf 2.3}49611649664635$ \\
$8$     &  $-{\bf 2.350}556073486847$ \\
$9$     &  $-{\bf 2.3506}20017092603$ \\
$\ldots$ &   $\ldots$             \\
$25$ &   $-2.350634775262785$ \\  
 \hline
\end{tabular}
\vspace{2mm}
\caption{Computed values using Algorithm {\rm \ref{alg_prEul}} 
for the ORANI678 matrix.
\label{tab:ex_ORANI}}
%
\end{center}
\end{table}

We integrated \eqref{ode-E-S-1} by Algorithm \ref{alg_prEul} and obtained the results in Table \ref{tab:ex_ORANI}.
The main cost is the number of eigentriplets evaluations by the Matlab routine eigs \cite{LeSY98} and is given by $n_{\rm eig} = 38$.
The CPU time is around $2$ seconds.

For comparison we also integrated the full-rank ODE \eqref{ode-E-S} by the Euler method (gradient descent) with variable stepsize
and obtained a similar behavior. The number of eigentriplets evaluations is $n_{\rm eig} = 35$ and the final approximation
to the $\eps$-pseudospectral abscissa is $2.350634775261177$, which coincides with the value computed by the rank-$1$ method up to
the $11$-th digit. The CPU time is $1.6$ seconds.

Since $u$ and $v$ turn out to be real, the gain in terms of memory requirements for the rank-1 algorithm is $90158/5058 \approx 17.82$, which is a significant
reduction in the storage of the iterates.

(ii)
Setting next
$
f(\lambda,\clambda) = \lambda\clambda = |\lambda|^2 
$ 
and the target eigenvalue the one - say $\lambda_{\min}$ - with smallest modulus, we approximated the \emph{structured 
distance to singularity} of $A$. Given the convergence to a local optimizer of Algorithm \ref{alg_prEul} we obtain
this way an upper bound to this distance. 
An immediate lower bound is the unstructured distance $\sigma_{\min}(A)$, i.e. the smallest singular value, which is
equal to $0.0033388$. As we see in Table \ref{tab:ORdsing}, the effective structured distance to singularity is one
order of magnitude larger.


Applying a Newton-bisection method we obtained the results
shown in Table \ref{tab:ORdsing}. Since the function $\phi$ 
and its derivative (see \eqref{eq:dphi}) are computed
inexactly  (by Algorithm \ref{alg_prEul}), we do not observe quadratic convergence.
\begin{table}[hbt]
\begin{center}
\begin{tabular}{|l|l|l|l|}\hline
  $k$ & $\eps_k$ & $\phi(\eps_k)$ & $\#$ eigs \\
 \hline
\rule{0pt}{9pt}
\!\!\!\! 
	$1$         & $0.0104015$ & $ 1.1019564 \cdot 10^{-2}$ & $13$  \\
	$2$         & $0.0176409$ & $9.5284061 \cdot 10^{-4}$ & $13$  \\
	$3$         & $0.0219541$ & $2.5263758 \cdot 10^{-4}$ & $14$  \\
	$4$         & $0.0243116$ & $6.5050153 \cdot 10^{-5}$ & $13$  \\
	$5$         & $0.0255439$ & $1.6503282 \cdot 10^{-6}$ & $13$  \\
	$6$         & $0.0261739$ & $4.1561289 \cdot 10^{-6}$ & $13$  \\
	$7$         & $0.0264923$ & $1.0428313 \cdot 10^{-6}$ & $13$  \\
	$8$         & $0.0266524$ & $2.6118300 \cdot 10^{-7}$ & $13$  \\
	$9$         & $0.0267327$ & $6.5355110 \cdot 10^{-8}$ & $13$  \\ 
  $10$        & $0.0267728$ & $9.6346192 \cdot 10^{-9}$ & $13$  \\
	$11$        & $0.0267930$ & $1.7293467 \cdot 10^{-10}$ & $4$  \\
 \hline
\end{tabular}
\vspace{2mm}
\caption{Distance to singularity for the ORANI678 matrix: 
computed values $\eps_k$, $\phi(\eps_k) =  |\lambda_{\min} (A + \eps_k E_k)|^2$ 
and number of eigenvalue computations of the inner rank-$1$ algorithm.\label{tab:ORdsing}}
\end{center}
\end{table}


The average CPU time of an outer iteration is around $528.6$ seconds, which is due to the 
augmented computational cost  required by the routine eigs. The average number of eigentriplets 
evaluation is $n_{\rm eig} = 12$.

\subsection{The matrix FIDAPM11 from the SPARSKIT collection}

The matrix $A$ is now a sparse real unsymmetric square matrix  taken from the set ECONAUS. It has dimension $n = 22294$ and a number
of nonzero entries $nz = 623554 \approx 30 n$.  Its sparsity pattern is plotted in Figure \ref{fig:Orani1}. 

We have set $\eps=0.5$ and applied our algorithms to the minimization problem \eqref{eq:optimizS} with
$
f(\lambda,\clambda) = -\lambda \clambda = - |\lambda|^2 
$ 
and $\cS$ the space of real matrices with the sparsity pattern of $A$,
and the target eigenvalue is the one with largest real part. We are thus aiming to compute the \emph{structured $\eps$-pseudospectral radius} of $A$.

Integrating both ODEs \eqref{ode-E-S} and \eqref{ode-E-S-1}, we obtain the same optimizer
$\lambda = 1.9716893.$ 
The number of computed eigen-triplets is $n_{\rm eig} = 107$ and $n_{\rm eig} = 99$, with a slight advantage of the rank-$1$
method. The CPU time is close to $32.95$ and $31.32$ seconds respectively.
Also in this case $u$ and $v$ turn out to be real so that the gain in terms of memory requirements is significant,  
$623554/44588 \approx 13.98$. 

\begin{figure}[ht] \label{fig:Ftk}
\vspace{-2cm}
\centerline{
\includegraphics[trim=0 -100 -20 -20,clip,scale=0.33]{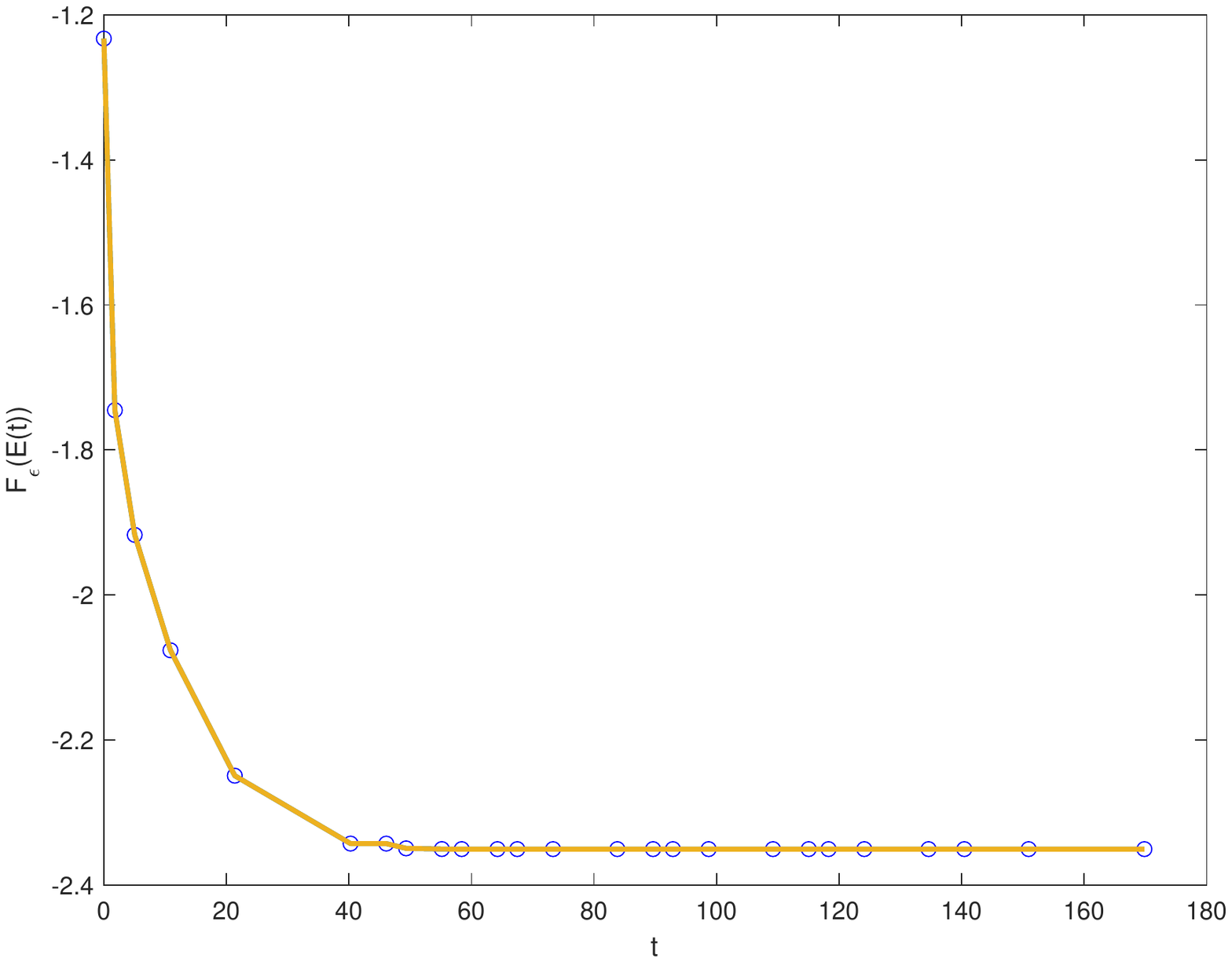}
\hspace{-15mm} 
\includegraphics[trim=0 -100 -20 -20,clip,scale=0.33]{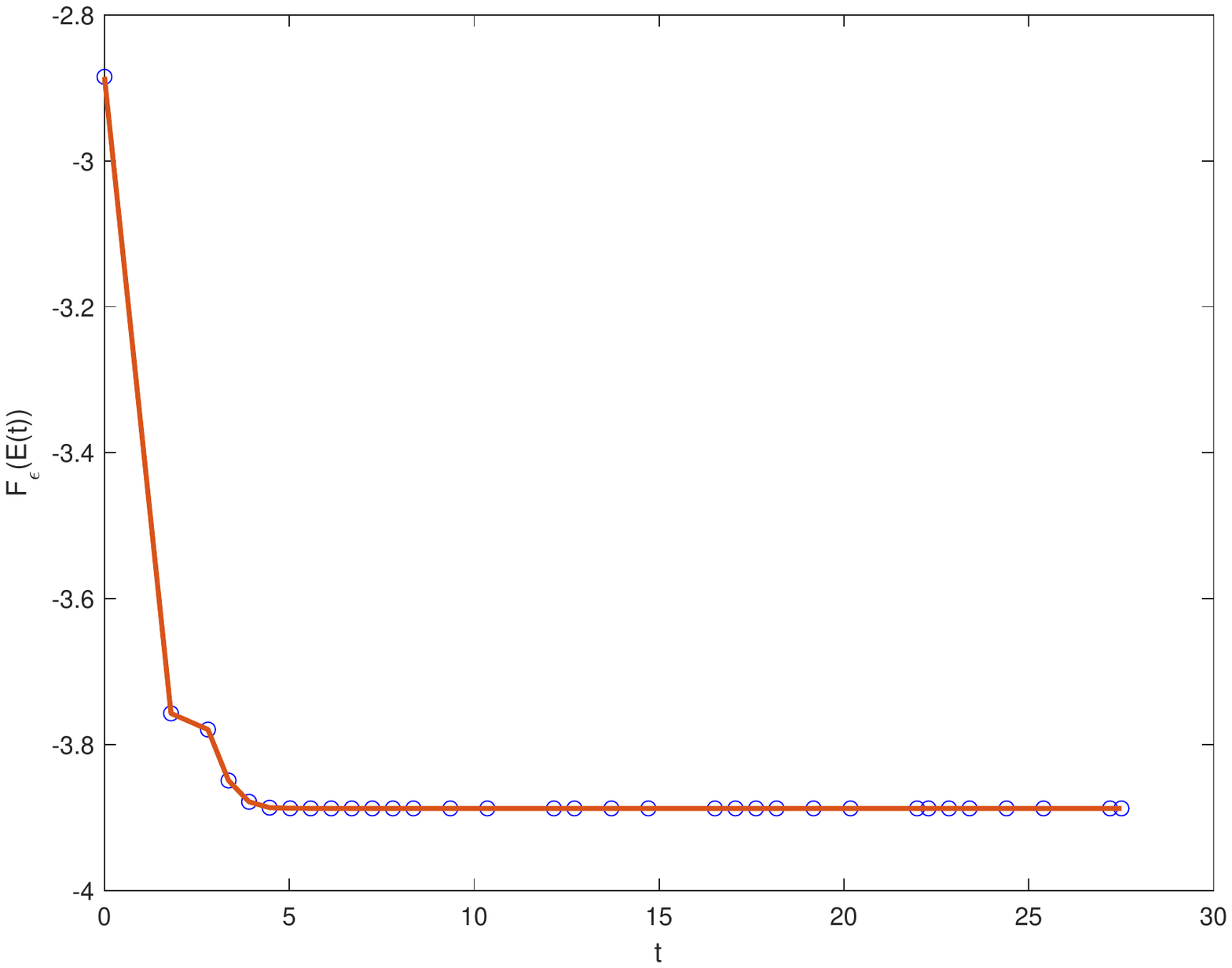}
}
\vspace{-3.6cm}
\caption{Behavior of the functional $F_\eps \left(E(t_k)\right)$ 
in the numerical integration by Algorithm {\rm \ref{alg_prEul}} 
for the matrix ORANI678 with $f(\lambda,\clambda) = - \Re(\lambda)$ (left picture) and for the matrix FIDAPM11
with $f(\lambda,\clambda) = -|\lambda|^2$ (right picture). In both cases $F_\eps \left(E(t_k)\right)$
decays monotonically with $k$. }
\end{figure}

\subsection{An example of control of the Stokes problem}
 
We consider an example from \cite{HMM19}, which arises in the discretization of 
the $2$-dimensional Stokes problem on a uniform quadratic grid.
Setting $25$ grid points on both sides of the square, we get a sparse matrix $A$ 
($J-R$ in the notation of \cite{HMM19}) which has dimension $n=1824$, while we choose
the control matrices $B$ and $C=B^\top$ to have size $n \times k$ and $l \times n$, 
respectively with $k=l=40$, randomly i.i.d. entries and unit Frobenius norm.

The matrix $A$ has the rightmost eigenvalue $\lambda = -6.4343098 \cdot 10^{-4}$,
which suggests a non-robust Hurwitz stability.

Running our algorithm on this example, we find the structured stability radius to be $0.0384039$, 
which is 60 times larger than $|\lambda|$.

Since the matrix is sparse we can exploit favorably the matrix vector products of the form 
(with $p \in \R^n$ the vector, $Z = u v^* \in \cM_1$, and $\rho$ the normalization factor)
\[
\left(A + \eps \rho B\,B^\dagger Z C^\dagger C \right)\,p,
\]
whose cost is linear in $n$.  

In Table \ref{tab:STradius} we show the Newton iteration where the number of eigentriplets evaluation is again
indicated by $n_{\rm eig}$.  The quadratically convergent behavior is evident.
\begin{table}[hbt]
\begin{center}
\begin{tabular}{|l|l|l|l|}\hline
  $k$ & $\eps_k$ & $\phi(\eps_k)$ & $\#$ eigs \\
 \hline
\rule{0pt}{9pt}
\!\!\!\! 
  $0$         & $0$ & $-6.4343098 \cdot 10^{-4}$ & $1$  \\
	$1$         & $0.02$ & $-3.1062242 \cdot 10^{-4} $ & $23$  \\
	$2$         & $0.0385299$ & $2.1414201 \cdot 10^{-6}$ & $26$  \\
	$3$         & $0.0384039$ & $9.7779625 \cdot 10^{-11}$ & $18$  \\
 \hline
\end{tabular}
\vspace{2mm}
\caption{Iterates for computing the structured stability radius for the Stokes problem matrix with
range- and corange-constrained perturbations; the optimal perturbation size $\eps^*$ is where $\ophi(\eps^*)=0$.
\label{tab:STradius}}
\end{center}
\end{table}

\subsection*{Acknowledgments}



Nicola Guglielmi acknowledges that his research was supported by funds from the
Italian MUR (Ministero dell'Universit\`a e della Ricerca) within the PRIN-17
Project ``Discontinuous dynamical systems: theory, numerics and applications''.
He also acknowledges affiliation to INdAM Research group GNCS (Gruppo Nazionale 
di Calcolo Scientifico).

\end{document}